\title{${\rm SO}(n)$ covariant local tensor valuations on polytopes}
\author{Daniel Hug and Rolf Schneider}
\date{}
\newcommand{\Sn}{{\mathbb S}^{n-1}}
\newcommand{\R}{{\mathbb R}}
\newcommand{\Kn}{{\mathcal K}^n}
\newcommand{\Pn}{{\mathcal P}^n}
\newcommand{\Rn}{{\mathbb R}^n}
\newcommand{\N}{{\mathbb N}}
\newcommand{\cH}{\mathcal{H}}
\newcommand{\Ha}{\mathcal{H}}
\newcommand{\bS}{\mathbb{S}}
\newcommand{\T}{\mathbb{T}}
\newcommand{\B}{\mathcal{B}}
\newcommand{\D}{{\rm d}}
\newcommand{\s}{{\mathbb S}}
\newcommand{\F}{{\mathcal F}}
\newtheorem{lemma}{Lemma}
\newtheorem{theorem}{Theorem}
\newtheorem{proposition}{Proposition}
\begin{document}

\maketitle

\begin{abstract}
The Minkowski tensors are valuations on the space of convex bodies in ${\mathbb R}^n$ with values in a space of symmetric tensors, having additional covariance and continuity properties. They are extensions of the intrinsic volumes, and as these, they are the subject of classification theorems, and they admit localizations in the form of measure-valued valuations. For these local tensor valuations, restricted to convex polytopes, a classification theorem has been proved recently, under the assumption of isometry covariance, but without any continuity assumption. This characterization result is extended here, replacing the covariance under orthogonal transformations by invariance under proper rotations only. This yields additional local tensor valuations on polytopes in dimensions two and three, but not in higher dimensions. They are completely classified in this paper. 

\medskip

{\em Key words and phrases:} Valuation; Minkowski tensor; local tensor valuation; convex polytope; rotation covariance; classification theorem

\medskip

{\em Mathematics subject classification:}  52A20, 52B45
\end{abstract}

\section{Introduction}\label{sec1}

A valuation on the space $\Kn$ of convex bodies in $\Rn$ is a mapping $\varphi$ from $\Kn$ into some abelian group with the property that
$$ \varphi(K\cup L) +\varphi(K\cap L)=\varphi(K)+\varphi(L)$$
whenever $K,L,K\cup L\in\Kn$. The best known examples are the intrinsic volumes or Minkowski functionals. They arise as the suitably normalized coefficients of the polynomial in $\rho$ which expresses, for a given convex body $K$, the volume of the outer parallel body of $K$ at distance $\rho\ge 0$. The celebrated characterization theorem of Hadwiger states that every rigid motion invariant continuous real-valued valuation on $\Kn$ is a linear combination of the intrinsic volumes. This theorem was the first culmination of a rich theory of valuations on convex bodies (for the older history, see the surveys \cite{McM93}, \cite{McMS83}), which in the last two decades has  again been widened and deepened considerably. For an introduction and for references, we refer to \cite{Sch14}, in particular Chapter 6 and Section 10.16. A survey on recent developments is given by Alesker \cite{Ale14}.

A natural extension of the intrinsic volumes is obtained if the volume is replaced by a higher moment. If the integral
$$ \int_K x\otimes \cdots\otimes x\,\D x,$$
where the integrand is an $r$-fold tensor product ($r\in\N$), is evaluated for the outer parallel body of $K$ at distanc $\rho$, one again obtains a polynomial in $\rho$, and its coefficients can be expressed as sums of symmetric tensors, which are functions of $K$. Suitably normalized, these yield the so-called Minkowski tensors. They are tensor-valued valuations on $\Kn$, with additional continuity and isometry covariance properties. (For brief introductions, we refer to \cite{Sch14}, Subsection 5.4.2, and to \cite{HS16}.) After some sporadic treatments (e.g., \cite{Mul53}, \cite{Prz97}), a thorough investigation of the Minkowski tensors began with the work of McMullen \cite{McM97}, who studied them on polytopes. Alesker \cite{Ale99b} (based on his work in \cite{Ale99a}) extended Hadwiger's classification theorem, showing that the real vector space of continuous, isometry covariant tensor valuations on $\Kn$ of given rank is spanned by suitable Minkowski tensors, multiplied by powers of the metric tensor. Questions of linear independence, leading to the determination of dimensions and bases, were treated in \cite{HSS08a}. Minkowski tensors were studied and used in integral geometry (\cite{BH14}, \cite{HSS08b}, \cite{Sch00}, \cite{SS06}), in stochastic geometry, stereology and image analysis (\cite{HHKM14}, \cite{HKS15}, \cite{KKH14}, \cite{SS02}), and for lower dimensions and ranks they were applied in physics (\cite{BDMW02}, \cite{MKSM13}, \cite{SchT10}, \cite{SchT11}, \cite{SchT12}). We also refer the reader to the Lecture Notes \cite{KJ16}.

Just as the intrinsic volumes have local versions, the support measures, with curvature and area measures as marginal measures, so the Minkowski tensors have local versions. They associate with every convex body a series of tensor-valued measures. The mappings defined in this way are valuations, with the additional properties of weak continuity, isometry covariance, and local determination. A corresponding classification theorem was proved in \cite{HS14}, based on the previous investigation \cite{Sch13} concerning the case of polytopes. This approach has some interesting features. First, on polytopes, a complete classification is possible without any continuity assumption. Which of the obtained local tensor valuation mappings have weakly continuous extensions to all convex bodies, was determined in \cite{HS14}. Second, the valuation property need not be assumed, but is a consequence.

The isometry covariance that is assumed in these characterization results has two components: translation covariance (a certain polynomial behaviour under translations), and covariance with respect to the orthogonal group ${\rm O}(n)$. Covariance with respect to other groups is also of interest. Recently, Haberl and Parapatits \cite{HP15} were able to classify all measurable ${\rm SL}(n)$ covariant symmetric tensor valuations on convex polytopes containing the origin in the interior. In the opposite direction (a smaller group than ${\rm O}(n)$), it was shown by Saienko \cite{Sai16}, under continuity and smoothness assumptions, that the classification of the local tensor valuations does not change for $n\ge 4$ if ${\rm O}(n)$ covariance is replaced by ${\rm SO}(n)$ covariance. In the physically relevant dimensions two and three, however, he surprisingly discovered additional local tensor valuations. It is the purpose of this paper to study ${\rm SO}(n)$ covariant local tensor valuations on polytopes, without assuming any continuity property, and also obtaining the valuation property as a consequence. Thus, the aim is to extend the results of \cite{Sch13}, replacing the orthogonal group ${\rm O}(n)$ by the group ${\rm SO}(n)$ of proper rotations. The main result is Theorem \ref{Thm2} below. Which of the newly found mappings have a weakly continuous extension to all convex bodies, will be studied elsewhere.

After collecting some notation in Section \ref{sec2}, we formulate our results in Section \ref{sec3}. The proof is prepared by some auxiliary results in Section \ref{sec4} and the refinement of two lemmas from \cite{Sch13} in Section \ref{sec5}. The main result is then proved in Section \ref{sec6}.

\section{Notation}\label{sec2}

We work in $n$-dimensional Euclidean space $\R^n$ ($n\ge 2$), with scalar product $\langle  \cdot \,, \cdot \rangle$ and induced norm $\|\cdot\|$. Its unit sphere is $\Sn$, and we write $\Sigma^n:= \R^n \times \Sn$ and equip this with the product topology. By $G(n,k)$ we denote the Grassmannian of $k$-dimensional linear subspaces of $\R^n$, $k\in\{0,\dots,n \}$. If $L\in G(n,k)$, we write $\s_L:=\s^{n-1}\cap L$. The orthogonal complement of $L\in G(n,k)$ is denoted by $L^\perp$. By $\Ha^k$ we denote the $k$-dimensional Hausdorff measure on $\R^n$, and $\Ha^{n-1}(\Sn)$ defines the  constant $\omega_n=2\pi^{n/2}/\Gamma(n/2)$. If $S$ is a topological space, we write $\B(S)$ for the $\sigma$-algebra of its Borel sets. For $S\subset\R^n$, the set of bounded Borel sets in $S$ is denoted by $\B_b(S)$.

The {\em orthogonal group} ${\rm O}(n)$ of $\R^n$ is the group of all linear mappings of $\R^n$ into itself preserving the scalar product, and  ${\rm SO}(n)$ is the subgroup of {\em rotations}, which preserve also the orientation.

By $\Pn$ we denote the set of (convex and nonempty) polytopes in $\R^n$. For $k\in\{0,\dots,n\}$, the set of $k$-dimensional faces of the polytope $P$ is denoted by $\F_k(P)$. For $F\in \F_k(P)$, the subspace $L(F)\in G(n,k)$, the {\em direction space} of $F$, is the translate, passing through $0$, of the affine hull of $F$. The set $\nu(P,F)\subset \s_{L(F)^\perp}$ is the set of outer unit normal vectors of $P$ at its face $F$. The generalized normal bundle (or normal cycle) of $P$ is the subset ${\rm Nor}\,P\subset\Sigma^n$ consisting of all pairs $(x,u)$ such that $x$ is a boundary point of $P$ and $u$ is an outer unit normal vector of $P$ at $x$.

This paper rests heavily on the previous papers \cite{Sch13} and \cite{HS14} on local tensor valuations and uses much of their terminology. We recall here briefly the underlying conventions on tensors. For $p\in\N_0$, we denote by $\T^p$ the real vector space of symmetric tensors of rank $p$ (or symmetric $p$-tensors, for short) on $\R^n$. The scalar product $\langle\cdot\,,\cdot\rangle$ of $\R^n$ is used to identify $\R^n$ with its dual space, so that each vector $a\in\R^n$ is identified with the linear functional $x\mapsto \langle a,x\rangle$, $x\in\R^n$. Thus, $\T^1$ is identified with $\R^n$ (and $\T^0$ with $\R$), and for $p\ge 1$, each tensor $T\in\T^p$ is a symmetric $p$-linear functional on $\R^n$. The symmetric tensor product $a\odot b$ is always abbreviated by $ab$, and for $x\in\R^n$, the $r$-fold symmetric tensor product $x\odot\dots\odot x$ is denoted by $x^r$. 

The {\em metric tensor} $Q$ on $\R^n$ is defined by $Q(x,y):=\langle x,y\rangle$ for $x,y\in \R^n$. For a subspace $L\in G(n,k)$, we denote by $\T^p(L)$ the space of symmetric $p$-tensors on $L$. We must distinguish between $Q_{(L)}$, the metric tensor on $L$, with $Q_{(L)}(a,b):=\langle a,b\rangle$ for $a,b\in L$, and the tensor $Q_{L}$, defined by 
$$ Q_{L}(a,b):= \langle \pi_L a,\pi_L b\rangle \quad\mbox{for }a,b\in\R^n,$$
where $\pi_L:\R^n\to L$ denotes the orthogonal projection. The mapping $\pi_L^*: \bigcup_{p\in\N_0} \T^p(L)\to  \bigcup_{p\in\N_0} \T^p$ is defined by $(\pi_L^* T)(a_1,\dots,a_p):= T(\pi_L a_1,\dots,\pi_L a_p)$, $a_1,\dots,a_p\in \R^n$, for $T\in\T^p(L)$. In particular, $\pi_L^* Q_{(L)}=Q_L$.  (This notation is different from the one used in \cite{Sch13}.)

\section{Formulation of Results}\label{sec3}

The {\em Minkowski tensors} of a convex body $K\in\Kn$ are given by
$$  
\Phi_k^{r,s}(K) = \frac{1}{r!s!} \frac{\omega_{n-k}}{\omega_{n-k+s}} \int_{\Sigma^n} x^ru^s\,\Lambda_k(K,\D(x,u))
$$
for $k=0,\dots,n-1$ and $r,s\in\N_0$. We refer to \cite{Sch14}, Section 4.2, for the support measures $\Lambda_0(K,\cdot),\dots,\Lambda_{n-1}(K,\cdot)$ appearing here, and to \cite{Sch14}, Section 5.4, for a brief introduction to the Minkowski tensors. The {\em local Minkowski tensors} are defined by
$$  
\phi_k^{r,s}(K,\eta) = \frac{1}{r!s!} \frac{\omega_{n-k}}{\omega_{n-k+s}} \int_\eta x^ru^s\,\Lambda_k(K,\D(x,u))
$$
for $\eta\in\B(\Sigma^n)$. If $P\in\Pn$ is a polytope, the special form of the support measures yields a more explicit expression, namely
$$ \phi_k^{r,s}(P,\eta) = C_{n,k}^{r,s} \sum_{F\in\F_k(P)}\int_F \int_{\nu(P,F)} {\bf 1}_\eta(x,u) x^r u^s\, \Ha^{n-k-1}(\D u)\,\Ha^k(\D x),$$
where we now use the abbreviation
$$ C_{n,k}^{r,s}:= (r!s!\omega_{n-k+s})^{-1}$$
and where the function ${\bf 1}_\eta$ is the characteristic function of $\eta$. The attempt to characterize these local tensor valuations on polytopes by their basic properties revealed in \cite{Sch13} that these properties are also shared by the {\em generalized local Minkowski tensors}. For a polytope $P\in\Pn$ these are defined by
\begin{equation}\label{2.6}  
\phi_k^{r,s,j}(P,\eta) := C_{n,k}^{r,s} \sum_{F\in\F_k(P)} Q_{L(F)}^{\hspace*{1pt}j}\int_F \int_{\nu(P,F)} {\bf 1}_\eta(x,u) x^r u^s\, \Ha^{n-k-1}(\D u)\,\Ha^k(\D x)
\end{equation}
for $\eta\in\B(\Sigma^n)$, $k\in\{0,\dots, n-1\}$, $r,s\in\N_0$, and for $j\in{\mathbb N}_0$ if $k>0$, but only $j=0$ if $k=0$. Recall that $x^r u^s$ in (\ref{2.6}) denotes a symmetric tensor product, and that also the product of  $Q_{L(F)}^{\hspace*{1pt}j}$ with the subsequent tensor-valued integral is a symmetric tensor product. 

For fixed $k,r,s,j$ and with $p:=2j+r+s$, the tensor $\phi_k^{r,s,j}$ defines a mapping $\Gamma:\Pn \times \B(\Sigma^n)\to\T^p$. For such a mapping $\Gamma$, the following properties are of interest. $\Gamma$ is called {\em translation covariant} of degree $q\le p$ if
\begin{equation}\label{tc} 
\Gamma(P+t,\eta+t) =\sum_{j=0}^q \Gamma_{p-j}(P,\eta)\frac{t^j}{j!}
\end{equation}
with tensors $ \Gamma_{p-j}(P,\eta)\in\T^{p-j}$, for $P\in\Pn$, $\eta\in\B(\Sigma^n)$, and $t\in\R^n$. Here $\eta+t:= \{(x+t,u): (x,u)\in\eta\}$, and $\Gamma_p=\Gamma$. If $\Gamma$ is translation covariant of degree zero, it is called {\em translation invariant}, and $\Gamma$ is just called {\em translation covariant} if it is translation covariant of some degree $q\le p$. The mapping $\Gamma$ is called ${\rm SO}(n)$ {\em covariant} if $\Gamma(\vartheta P,\vartheta \eta)= \vartheta \Gamma(P,\eta)$ for $P\in\Pn$, $\eta\in\B(\Sigma^n)$, $\vartheta\in{\rm SO}(n)$, where $\vartheta\eta:= \{(\vartheta x,\vartheta u): (x,u)\in\eta\}$. Here the operation of  ${\rm SO}(n)$ on $\T^p$ is defined by $(\vartheta T)(x_1,\dots,x_p):= T(\vartheta^{-1}x_1,\dots,\vartheta^{-1}x_p)$ for $x_1,\dots,x_p\in\R^n$ and $\vartheta\in {\rm SO}(n)$. Similarly, ${\rm O}(n)$ covariance is defined. Finally, the mapping $\Gamma$ is {\em locally defined} if $\eta\cap {\rm Nor}\,P = \eta'\cap {\rm Nor}\,P'$ with $P,P'\in\Pn$ and $\eta,\eta'\in\B(\Sigma^n)$ implies $\Gamma(P,\eta)=\Gamma(P',\eta')$.

The mapping defined by $\Gamma(P,\eta):= \phi_k^{r,s,j}(P,\eta)$, for fixed $k,r,s,j$, has the following properties. For each $P\in\Pn$, $\Gamma(P,\cdot)$ is a $\T^p$-valued measure, with $p=2j+r+s$. $\Gamma$ is translation covariant, ${\rm O}(n)$ covariant, and locally defined. These properties are not changed (except that the rank must be adjusted) if $\Gamma$ is multiplied (symmetrically) by a power of the metric tensor.

The following theorem was essentially proved in \cite{Sch13}, with some simplifications and supplements provided in \cite{HS14}.

\begin{theorem}\label{Thm1}
For $p\in\N_0$, let $T_p(\Pn)$ denote the real vector space of all mappings $\Gamma:\Pn\times\B(\Sigma^n)\to\T^p$ with the following properties.\\[1mm]
$\rm (a)$ $\Gamma(P,\cdot)$ is a $\T^p$-valued measure, for each $P\in\Pn$,\\[1mm]
$\rm (b)$ $\Gamma$ is translation covariant and ${\rm O}(n)$ covariant,\\[1mm]
$\rm (c)$ $\Gamma$ is locally defined.\\[1mm]
Then a basis of $T_p(\Pn)$ is given by the mappings $Q^m\phi^{r,s,j}_k$, where $m,r,s,j\in\N_0$ satisfy $2m+2j+r+s=p$, where $k\in\{0,\dots,n-1\}$, and where $j=0$ if $k\in\{0,n-1\}$.
\end{theorem}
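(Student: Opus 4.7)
The plan is to proceed in three stages: first verify that each $Q^m\phi^{r,s,j}_k$ lies in $T_p(\Pn)$; then decompose an arbitrary $\Gamma\in T_p(\Pn)$ face by face; and finally classify the resulting face contributions via invariant theory.

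The verification direction is routine. Property (a) is immediate from the explicit face formula (\ref{2.6}). For translation covariance, the identities $L(F+t)=L(F)$ and $\nu(P+t,F+t)=\nu(P,F)$ combine with the binomial expansion of $(x+t)^r$ under the integral over $F+t$ to yield the polynomial law (\ref{tc}) of degree $r\le p$. ${\rm O}(n)$ covariance follows because each building block $x^r$, $u^s$, $Q$, $Q_{L(F)}$ transforms equivariantly under a simultaneous rotation of $P$, $\eta$ and the face structure. Local definition is clear since $F\times\nu(P,F)\subset{\rm Nor}\,P$.

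For the converse, I would take $\Gamma\in T_p(\Pn)$ and decompose $\Gamma(P,\cdot)$ face by face using (a) and (c). Since ${\rm Nor}\,P=\bigsqcup_{k,F\in\F_k(P)} F\times\nu(P,F)$ up to a lower-dimensional set, the measure $\Gamma(P,\cdot)$ splits into pieces $\Gamma_k(P,F,\cdot)$ concentrated on each stratum, and (c) forces $\Gamma_k(P,F,\cdot)$ to depend on $P$ only through the local data $(L(F),N(P,F))$, where $N(P,F)$ is the normal cone. Translation covariance along $L(F)$, applied via (\ref{tc}), then forces $\Gamma_k(P,F,\cdot)$ to be representable as an integral over $F\times\nu(P,F)$ of a $\T^p$-valued density $T_k(L(F),N(P,F);x,u)$, polynomial in $(x,u)$, against the natural Hausdorff measure $\Ha^k\otimes\Ha^{n-k-1}$.

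The main obstacle is the ${\rm O}(n)$-equivariance classification of this density: one must show that every ${\rm O}(n)$-equivariant symmetric $\T^p$-valued polynomial in $(x,u)$, depending on $(L,N)$ only through $L\in G(n,k)$ and a unit vector $u\in L^\perp\cap\Sn$, is a linear combination of the elementary monomials $Q^m Q_L^{\,j} x^r u^s$ with $2m+2j+r+s=p$. This reduces, by fixing orthonormal frames of $L$ and $L^\perp$ and expanding polynomially in coordinates, to the first fundamental theorem of invariant theory for ${\rm O}(k)\times{\rm O}(n-k)$: the equivariant symmetric tensors are generated by the metric tensors $Q_L$, $Q_{L^\perp}=Q-Q_L$ (one redundant) together with the input vectors $x$, $u$. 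Substituting back into the face sum identifies $\Gamma$ as a linear combination of $Q^m\phi^{r,s,j}_k$. The restrictions $j=0$ for $k=0$ and $k=n-1$ emerge naturally: $Q_{\{0\}}=0$ in the vertex case, while $Q_{L(F)}=Q-u^2$ in the facet case collapses its powers into combinations of $Q$ and $u^s$. Linear independence is the final and easier step: evaluation on orthogonal parallelotopes of varying dimensions isolates individual faces through the choice of $\eta$, and contracting the results with suitable vectors from $L(F)$ and $L(F)^\perp$ distinguishes the different $Q^m Q_{L(F)}^j$ components, forcing every coefficient in a vanishing linear combination to be zero.
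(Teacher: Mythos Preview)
The paper does not reprove Theorem \ref{Thm1}; it simply cites \cite{Sch13} and \cite{HS14}. Your outline broadly follows the same architecture as those papers (and as the ${\rm SO}(n)$ argument worked out here in Section~\ref{sec6}): localize to faces, use translation behaviour to reduce to a map $\Delta_k(L,\cdot)$ on $\B(\bS_{L^\perp})$, and then classify that map by orthogonal invariance. However, there is a genuine gap at the heart of your classification step.

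You write that translation covariance ``forces $\Gamma_k(P,F,\cdot)$ to be representable as an integral \dots\ of a $\T^p$-valued density $T_k(\dots;x,u)$, polynomial in $(x,u)$.'' Translation covariance acts only in the $x$-direction; it says nothing about the $u$-variable. After the face decomposition and the reduction to translation invariance, what one actually obtains is, for each $L\in G(n,k)$, a $\T^p$-valued \emph{measure} $\Delta_k(L,\cdot)$ on $\bS_{L^\perp}$ that intertwines ${\rm O}(n-k)$ on $L^\perp$. There is no a priori reason for this measure to be absolutely continuous with respect to $\cH^{n-k-1}$, let alone to have polynomial density in $u$. Your appeal to the first fundamental theorem for ${\rm O}(k)\times{\rm O}(n-k)$ classifies equivariant \emph{polynomial} maps, not equivariant measures, so it does not apply until this absolute continuity is established.

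This is exactly the role of Lemma~4 in \cite{Sch13} (stated here as Lemma~\ref{L4a}): a tensor-valued measure on $\bS^{m-1}$ that intertwines ${\rm O}(m)$ is necessarily of the form $\sum_j a_j Q^j\int_B u^{r-2j}\,\cH^{m-1}(\D u)$. Its proof is not invariant theory for polynomials; it evaluates the measure on spherical caps, uses the stabilizer of a point, and exploits countable additivity. You need this lemma (or an equivalent argument) before you can speak of a polynomial density in $u$; once you have it, the remainder of your sketch --- combining $Q_L$, $Q_{L^\perp}=Q-Q_L$, and $u^s$, and the linear independence check --- is essentially the argument of \cite{Sch13,HS14}.
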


The purpose of the following is to extend this characterization of local tensor valuations on polytopes from ${\rm O}(n)$ to ${\rm SO}(n)$ covariance. It was discovered by Saienko \cite{Sai16} that under this weaker assumption there are additional tensor valuations in dimensions two and three. 

In the following, the spaces $\R^2$ and $\R^3$ are endowed with fixed orientations. Let $P\in{\mathcal P}^3$. For each edge $F\in{\mathcal F}_1(P)$, we choose a unit vector $v_F\in L(F)$. For $u\in \s_{L(F)^\perp}$, let $v_F\times u =: \overline u$ denote the vector product of $v_F$ and $u$ in ${\mathbb R}^3$; thus $\overline u$ is the unique unit vector such that $(v_F,u, \overline u)$ is a positively oriented orthonormal basis of $\R^3$. We define
\begin{align}\label{1a} 
\widetilde\phi^{r,s,j}(P,\eta) &:= \sum_{F\in{\mathcal F}_1(P)} Q^j_{L(F)} v_F\int_F\int_{\nu(P,F)} {\bf 1}_\eta(x,u)x^r(v_F\times u) u^s\,\cH^1(\D u)\, \cH^1(\D x)\\
 &= \sum_{F\in{\mathcal F}_1(P)}  v_F^{2j+1}\int_F\int_{\nu(P,F)} {\bf 1}_\eta(x,u)x^r(v_F\times u) u^s\,\cH^1(\D u)\, \cH^1(\D x) \label{1c}
\end{align}
for $\eta\in\B(\Sigma^3)$ and $r,s,j\in{\mathbb N}_0$. Here we have used that $Q_{L(F)}=v_F^2$, since $\dim F=1$. The tensor $\widetilde\phi^{r,s,j}(P,\eta)$ is well-defined, since it does not change if the  vector $v_F$ is replaced by $-v_F$. Since
\begin{equation}\label{01} 
\widetilde\phi^{r,s,j}(P+t,\eta+t) =\sum_{i=0}^r \binom{r}{i}\widetilde\phi^{r-i,s,j}(P,\eta)\,t^i
\end{equation}
for $t\in{\mathbb R}^3$, the mapping $\widetilde\phi^{r,s,j}$ is translation covariant. It is also ${\rm SO}(3)$ covariant, since $\vartheta v_F\times\vartheta u=\vartheta(v_F\times u)$ for $\vartheta\in{\rm SO}(3)$.

Now let $n=2$. For $u\in {\mathbb S}^1$, let $\overline u\in {\mathbb S}^1$ be the unique vector for which $(u,\overline u)$ is a positively oriented orthonormal basis of $\R^2$. For $P\in{\mathcal P}^2$, $k\in\{0,1\}$ and $\eta \in \B(\Sigma^2)$ we define
\begin{equation}\label{1b} 
\widetilde \phi_k^{r,s}(P,\eta):= \sum_{F\in\F_k(P)} \int_F \int_{\nu(P,F)} {\bf 1}_\eta(x,u) x^r \overline u\, u^s\,\Ha^{1-k}(\D u)\,\Ha^k(\D x).
\end{equation}
Of course, if $\dim P=2$ and $F\in\F_1(P)$, then $\nu(P,F)=\{u_F\}$ with a unique vector $u_F$, and we have
$$ \widetilde \phi_1^{r,s}(P,\eta)= \sum_{F\in\F_1(P)}  \overline {u_F}\, u_F^s \int_F  {\bf 1}_\eta(x,u_F) x^r\,\Ha^1(\D x).$$
If $\dim P=1$ and $F\in\F_1(P)$, then $P=F$ and $\nu(P,F)=\{\pm u_F\}$, and therefore
$$ \widetilde \phi_1^{r,s}(P,\eta)=  \int_F  [{\bf 1}_\eta(x,u_F) \overline{u_F}\,u_F^s + {\bf 1}_\eta(x,-u_F) (-\overline{u_F})(-u_F)^s] x^r\,\Ha^1(\D x).$$
For the case $k=0$ we note that for $F\in\F_0(P)$ we have $F=\{x_F\}$ and hence
$$ \widetilde \phi_0^{r,s}(P,\eta)=\sum_{F\in\F_0(P)} x_F^r\int_{\nu(P,F)} {\bf 1}_\eta(x_F,u)\overline u u^s\,\Ha^1(\D u).$$
The translation covariance and ${\rm SO}(2)$ covariance of $\widetilde \phi_k^{r,s}$ are easy to check.

The mappings $\widetilde\phi^{r,s,j}(\cdot,\eta)$ and   $\widetilde \phi_k^{r,s}(\cdot,\eta)$ ($k=0,1$), defined on polytopes in $\R^3$, respectively $\R^2$, are valuations. This is proved as it was done for the mappings $\phi_k^{r,s,j}(\cdot,\eta)$ in \cite[Theorem 3.3]{HS14}.

The following result is the counterpart to Theorem \ref{Thm1}, with the rotation group ${\rm SO}(n)$ instead of the orthogonal group ${\rm O}(n)$.

\begin{theorem}\label{Thm2}
For $p\in\N_0$, let $\widetilde T_p(\Pn)$ denote the real vector space of all mappings $\Gamma:\Pn\times\B(\Sigma^n)\to\T^p$ with the following properties.\\[1mm]
$\rm (a)$ $\Gamma(P,\cdot)$ is a $\T^p$-valued measure, for each $P\in\Pn$,\\[1mm]
$\rm (b)$ $\Gamma$ is translation covariant and ${\rm SO}(n)$ covariant,\\[1mm]
$\rm (c)$ $\Gamma$ is locally defined.

Then a basis of $\widetilde T_p(\Pn)$ is given by the mappings $Q^m\phi^{r,s,j}_k$, where $m,r,s,j\in\N_0$ satisfy $2m+2j+r+s=p$, where $k\in\{0,\dots,n-1\}$, and where $j=0$ if $k\in\{0,n-1\}$, together with\\[1mm]
$\bullet$ if $n\ge 4$, no more mappings,\\[1mm]
$\bullet$ if $n=3$, the mappings $Q^m\widetilde \phi^{r,s,j}$, where $m,r, s,j\in\N_0$ satisfy $2m+2j+r+s+2=p$,\\[1mm]
$\bullet$ if $n=2$, the mappings $Q^m\widetilde \phi_k^{r,s}$, where $m,r,s\in\N_0$ satisfy $2m+r+s+1=p$ and where \hspace*{6pt}  $k\in\{0,1\}$.
\end{theorem}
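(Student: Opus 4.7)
The plan is to separate $\widetilde T_p(\Pn)$ into an ${\rm O}(n)$-covariant part, already handled by Theorem \ref{Thm1}, and a \emph{pseudo-tensorial} part that transforms under ${\rm O}(n)$ with an additional sign $\det\vartheta$. Fix any reflection $\sigma\in{\rm O}(n)\setminus{\rm SO}(n)$ and, for $\Gamma\in\widetilde T_p(\Pn)$, define
$$\Gamma^{\pm}(P,\eta):=\tfrac{1}{2}\bigl[\Gamma(P,\eta)\pm\sigma^{-1}\Gamma(\sigma P,\sigma\eta)\bigr].$$
Using $\sigma^2=I$ and the ${\rm SO}(n)$ covariance of $\Gamma$, a direct check gives $\Gamma^+(\vartheta P,\vartheta\eta)=\vartheta\Gamma^+(P,\eta)$ and $\Gamma^-(\vartheta P,\vartheta\eta)=(\det\vartheta)\vartheta\Gamma^-(P,\eta)$ for every $\vartheta\in{\rm O}(n)$. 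Properties (a) and (c) and translation covariance are inherited by both summands, so $\Gamma^+\in T_p(\Pn)$ is expanded by Theorem \ref{Thm1} in the stated basis of $Q^m\phi_k^{r,s,j}$. The entire problem therefore reduces to classifying the pseudo-tensor valuations $\Gamma^-$.

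To deal with $\Gamma^-$, I would carry the strategy of \cite{Sch13} through the $\det$ twist, using the auxiliary material of Section \ref{sec4} and the two refined lemmas of Section \ref{sec5}. Local determination, translation covariance, and the measure property force a face-by-face representation
$$\Gamma^-(P,\eta)=\sum_{k=0}^{n-1}\sum_{F\in\F_k(P)}\int_F\int_{\nu(P,F)}{\bf 1}_\eta(x,u)\,\Psi_k(x,u,L(F))\,\Ha^{n-k-1}(\D u)\,\Ha^k(\D x),$$
in which the $\T^p$-valued integrand $\Psi_k$ is polynomial in $(x,u)$ of bounded total degree determined by $p$, depends measurably on $L=L(F)\in G(n,k)$, and obeys the twisted equivariance $\vartheta\Psi_k(x,u,L)=(\det\vartheta)\Psi_k(\vartheta x,\vartheta u,\vartheta L)$ for $\vartheta\in{\rm O}(n)$. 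Installing the $\det$ twist inside the reduction scheme of \cite{Sch13} is exactly the role of the refined lemmas of Section \ref{sec5}.

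The remaining task is to classify all such equivariant pseudo-tensor integrands. At a fixed direction space $L$ of dimension $k$, the stabilizer ${\rm SO}(L)\times{\rm SO}(L^\perp)$ acts on $\Psi_k$, and the $\det$ twist forces any nonzero construction to involve the Levi-Civita volume form $\varepsilon$ of $\R^n$. Matching its $n$ antisymmetric indices against the available symmetric data $x$, $u$, $Q_L$, $Q_{L^\perp}$, and symmetrizing into $\T^p$, leaves exactly the following alternatives: for $n\ge 4$, no nonzero symmetric pseudo-tensor of the required form survives, so $\Gamma^-=0$; for $n=3$, only the edge case $k=1$ survives, with the unique pseudo-vector $v_F\times u$, whence $\Gamma^-$ is a linear combination of the $Q^m\widetilde\phi^{r,s,j}$; for $n=2$, both $k=0$ and $k=1$ contribute through the unique pseudo-vector $\overline u$, forcing $\Gamma^-$ to be a combination of the $Q^m\widetilde\phi_k^{r,s}$. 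Multiplication by powers of $Q$ accounts for the residual scalar freedom.

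The main obstacle will be precisely this rigidity statement for the integrands $\Psi_k$: ruling out any exotic equivariant pseudo-tensor in dimensions $n\ge 4$, and showing that the listed forms exhaust the possibilities in dimensions $2$ and $3$. This is why the $\det$-twisted refinements of the two core lemmas of \cite{Sch13} are needed, rather than a direct application. Linear independence of the final basis then follows by combining the independence of the $Q^m\phi_k^{r,s,j}$ established in \cite{Sch13} with evaluations of the new valuations $\widetilde\phi^{r,s,j}$ and $\widetilde\phi_k^{r,s}$ on a small family of test polytopes (segments and polygons for $n=2$; triangles and tetrahedra with prescribed edge directions for $n=3$), on which the pseudo-tensor ingredients $\overline u$ and $v_F\times u$ cannot be reproduced by any linear combination of ${\rm O}(n)$-covariant basis elements.
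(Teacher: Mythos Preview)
Your $\Gamma^\pm$ splitting is a legitimate and rather elegant alternative to the paper's route: the paper does \emph{not} separate an ${\rm O}(n)$-covariant part, but instead first reduces the translation covariant problem to the translation invariant one (Theorem~\ref{Thm3}) via \cite[Lemmas 3.1, 3.2]{HS14}, and then analyses the induced maps $\Delta_k:G(n,k)\times\B(\Sn)\to\T^p$ directly under ${\rm SO}(n)$ covariance, case by case in $k$ and $n$. The exceptional valuations for $n=2,3$ emerge from that case analysis (Subcases~1b, 3b, 3c in Section~\ref{sec6}), not from any pseudo-tensor bookkeeping. Your decomposition buys an immediate disposal of $\Gamma^+$ via Theorem~\ref{Thm1}, at the cost of shifting all remaining work into the classification of $\Gamma^-$.

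That is where the gap lies. First, the face-by-face representation with an integrand $\Psi_k(x,u,L)$ polynomial in $(x,u)$ is not automatic: in the paper the $x$-dependence is removed by the reduction to translation invariance, and the polynomiality in $u$ is a \emph{conclusion} of Lemmas~\ref{L3} and~\ref{L4}, not an input. Second, and more seriously, the lemmas of Section~\ref{sec5} are not $\det$-twisted lemmas at all: they refine \cite[Lemmas~3,~4]{Sch13} by weakening the hypothesis from ${\rm O}(n)$- to ${\rm SO}(n)$-invariance, and they say nothing about tensors or measures transforming with an extra factor $\det\vartheta$. To run your program you would have to state and prove genuine pseudo-tensor analogues (classifying ${\rm O}(L^\perp)$-intertwining pseudo-tensor measures on $\s_{L^\perp}$), and then redo the reduction of \cite{Sch13} for them. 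Your Levi-Civita heuristic (``matching $n$ antisymmetric indices against the available symmetric data'') is suggestive but is not a proof; compare, for instance, the vanishing of the odd-$v_L$ part for $k=1$, $n\ge4$ in the paper's Subcase~3a, which is obtained by a concrete rotation argument rather than by counting slots in $\varepsilon$. In short, the approach can be made to work, but not by citing Section~\ref{sec5} as written.
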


As in \cite{HS14} (and similarly earlier in Alesker's work \cite{Ale99b}), this general result follows from its special case where $\Gamma$ is translation invariant. Therefore, we formulate this special case separately, deleting the assertion of linear independence, which we discuss in the next section.

\begin{theorem}\label{Thm3}
Let $p\in\N_0$. Let $\Gamma:{\mathcal P}^n\times{\mathcal B}(\Sigma^n) \to\T^p$ be a mapping with the following properties.\\[1mm]
$\rm (a)$ $\Gamma(P,\cdot)$ is a $\T^p$-valued measure, for each $P\in\Pn$,\\[1mm]
$\rm (b)$ $\Gamma$ is translation invariant and ${\rm SO}(n)$ covariant,\\[1mm]
$\rm (c)$ $\Gamma$ is locally defined.

Then $\Gamma$ is a linear combination, with constant coefficients, of the mappings $Q^m\phi_k^{0,s,j}$, where $m,s,j\in \N_0$ satisfy $2m+2j+s=p$, where $k\in\{0,\dots,n-1\}$, and where $j=0$ if $k\in\{0,n-1\}$, together with\\[1mm]
$\bullet$ if $n\ge 4$, no more mappings,\\[1mm]
$\bullet$ if $n=3$, the mappings $Q^m\widetilde \phi^{0,s,j}$, where $m,s,j\in\N_0$ satisfy $2m+2j+s+2=p$,\\[1mm]
$\bullet$ if $n=2$, the mappings $Q^m\widetilde \phi_k^{0,s}$, where $m,s\in\N_0$ satisfy $2m+s+1=p$ and where $k\in\{0,1\}$.
\end{theorem}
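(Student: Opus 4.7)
The plan is to adapt the proof strategy used for Theorem~\ref{Thm1} in \cite{Sch13}, tracking the places where the full orthogonal covariance was essential and showing that under the weaker $\mathrm{SO}(n)$ covariance the same scheme produces exactly the additional tensors listed for $n=2,3$. The first task, to which the auxiliary results of Section~\ref{sec4} and the refinements of the two lemmas from \cite{Sch13} in Section~\ref{sec5} are directed, is to derive from properties (a)--(c) a face-wise kernel representation
\begin{equation*}
\Gamma(P,\eta)=\sum_{k=0}^{n-1}\sum_{F\in\F_k(P)}\int_F\int_{\nu(P,F)}{\bf 1}_\eta(x,u)\,f_k(L(F),u)\,\cH^{n-k-1}(\D u)\,\cH^k(\D x),
\end{equation*}
where $f_k\colon\{(L,u):L\in G(n,k),\,u\in\s_{L^\perp}\}\to\T^p$ is a Borel measurable kernel. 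Local determination together with the $\sigma$-additivity in $\eta$ forces the required support structure, and translation invariance removes any position dependence, leaving $f_k$ a function of $(L,u)$ only.

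The $\mathrm{SO}(n)$ covariance of $\Gamma$ is then equivalent to the equivariance $f_k(\vartheta L,\vartheta u)=\vartheta f_k(L,u)$ for all $\vartheta\in\mathrm{SO}(n)$, and since $\mathrm{SO}(n)$ acts transitively on the flags $\{(L,u):L\in G(n,k),\,u\in\s_{L^\perp}\}$, each $f_k$ is determined by its value $T:=f_k(L_0,u_0)\in\T^p$ at a fixed reference pair, subject to $T$ being invariant under the stabilizer $G_0\subset\mathrm{SO}(n)$ of $(L_0,u_0)$. In the orthogonal decomposition $\R^n=L_0\oplus\mathrm{span}(u_0)\oplus M$ with $M:=L_0^\perp\cap u_0^\perp$, one has $G_0=\{\mathrm{diag}(A,1,B):A\in\mathrm{O}(k),\,B\in\mathrm{O}(n-k-1),\,\det A\cdot\det B=1\}$, so the problem reduces to classifying the $G_0$-invariant symmetric $p$-tensors on $\R^n$.

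Compared with the orthogonal case, where the full product $\mathrm{O}(k)\times\mathrm{O}(n-k-1)$ acts and classical invariant theory gives the generators $Q_{L_0}$, $u_0$ and $Q_M=Q-Q_{L_0}-u_0^2$ (reproducing the $Q^m\phi_k^{0,s,j}$ basis of Theorem~\ref{Thm1}), the relaxation to $G_0$ admits one additional family of invariants: symmetric products $S_1\odot S_2$ with $S_1\in\mathrm{Sym}(L_0)$ transforming by the $\det$-character of $\mathrm{O}(k)$ and $S_2\in\mathrm{Sym}(M)$ transforming by the $\det$-character of $\mathrm{O}(n-k-1)$, where an empty factor carries the trivial character. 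The $\det$-character of $\mathrm{O}(m)$ occurs in $\bigoplus_p\mathrm{Sym}^p(\R^m)$ only when $m=1$, in which case it is realized by any odd power of a unit vector; hence nontrivial new invariants exist precisely when $(n,k)\in\{(2,0),(2,1),(3,1)\}$. For $(n,k)=(3,1)$, choosing $v$ to span $L_0$ and $\overline u:=v\times u_0$ to span $M$, the new invariants are symmetric products of $v\cdot\overline u$ with the generators above, matching the kernels of $Q^m\widetilde\phi^{0,s,j}$; for $(n,k)\in\{(2,0),(2,1)\}$, the triviality of $G_0$ makes $\overline u\cdot u_0^s$ an admissible kernel, matching those of $Q^m\widetilde\phi_k^{0,s}$. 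For $n\ge 4$, no $k$ satisfies the dimensional constraints, and Theorem~\ref{Thm1} is recovered without additions.

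The principal obstacle I anticipate is not the representation-theoretic classification above, but the derivation of the kernel representation in the first step under the weaker $\mathrm{SO}(n)$ hypothesis. The corresponding arguments of \cite{Sch13} rely on reflection symmetries of simplicial test polytopes to isolate the face contributions; in the $\mathrm{SO}(n)$-setting these reflection tricks are no longer directly available, and the two refined lemmas in Section~\ref{sec5} must replace them with rotation-only arguments, precisely at the cost of widening the space of admissible kernels enough to accommodate the orientation-sensitive ones just identified. A secondary point, already noted after \eqref{1b}, is that the new kernels indeed glue consistently into global mappings on $\Pn$, so that $\widetilde\phi^{0,s,j}$ and $\widetilde\phi_k^{0,s}$ really belong to $\widetilde T_p(\Pn)$; this follows from the same valuation-type argument used for $\phi_k^{r,s,j}$ in \cite[Theorem~3.3]{HS14}.
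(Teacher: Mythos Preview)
Your overall architecture matches the paper's: reduce $\Gamma$ via local determination and translation invariance to equivariant data indexed by face dimension, classify that data, and reassemble. Where you diverge is in the classification step. The paper does not first extract a pointwise kernel $f_k(L,u)$ and then analyse its stabilizer; it works throughout with the tensor-valued \emph{measures} $\Delta_k(L,\cdot)$ on $\s_{L^\perp}$, uses Lemma~\ref{L3} to split off the $L$-component, and then applies Lemma~\ref{L4a} or Lemma~\ref{L4} in $L^\perp$ to classify the resulting measures directly, case by case in $(n,k)$. Absolute continuity with respect to $\cH^{n-k-1}$ is never established as a separate step; it is a byproduct of the explicit form these lemmas deliver. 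Your stabilizer computation of the $G_0$-invariants in $\T^p$ is a correct and conceptually cleaner substitute once a density is known to exist, and it explains transparently why exactly $(n,k)\in\{(2,0),(2,1),(3,1)\}$ are exceptional; the paper's route is more hands-on but does not presuppose the density.

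One correction to your diagnosis of the ``principal obstacle''. The face-wise decomposition itself, i.e.\ passing from $\Gamma$ to the measures $\Delta_k(L,\cdot)$ (the paper's relation \eqref{4}), uses only local determination, the measure property, and translation invariance; no reflections of test polytopes enter, and the concentration of $\Gamma(P,\cdot)$ on ${\rm Nor}\,P$ is taken from \cite[Lemma~3.3]{HS14}, whose proof does not use ${\rm O}(n)$ covariance. The genuine ${\rm O}(n)$-versus-${\rm SO}(n)$ issue arises only in classifying $\Delta_k(L,\cdot)$, and there the paper's device is not a reflection of a polytope but the observation that for $k\ge 1$ one can realise any element of ${\rm O}(L^\perp)$ as the restriction of some $\vartheta\in{\rm SO}(n)$ with $\vartheta L=L$, so that Lemma~\ref{L4a} still applies in $L^\perp$; only $k=0$ (and the one-dimensional factors when $k=1$) force the genuinely ${\rm SO}$-refined Lemma~\ref{L4}. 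So the obstacle sits where you place it in the logic, but its nature and resolution are different from what you describe.
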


In the next section, we prove the linear independence result contained in Theorem \ref{Thm2} and show how Theorem \ref{Thm2} follows from Theorem \ref{Thm3} (and Proposition \ref{Lem1}). In Section \ref{sec5}, we extend two lemmas of \cite{Sch13} from ${\rm O}(n)$ covariance to ${\rm SO}(n)$ covariance. The proof of Theorem \ref{Thm3} then follows in Section \ref{sec6}.

\section{Auxiliary Results}\label{sec4}

First we explain how Theorem \ref{Thm2} is deduced from Theorem \ref{Thm3} and Proposition \ref{Lem1} below. Each of $\phi_k^{r,s,j}$ if $n\ge 2$, each of $\widetilde \phi^{r,s,j}$ if $n=3$, and each of $\widetilde \phi_k^{r,s}$ if $n=2$, is a mapping $\Gamma: \Pn\times\B(\Sigma^n)\to \T^p$ (for suitable $p$) which has the following properties:\\[1mm]
(a) $\Gamma(P,\cdot)$ is a $\T^p$-valued measure, for each $P\in\Pn$, \\[1mm]
(b) $\Gamma$ is translation covariant of some degree $q\le p$ and ${\rm SO}(n)$ covariant, \\[1mm]
(c) $\Gamma$ is locally defined.\\[1mm] 
It follows from \cite[Lemmas 3.1, 3.2]{HS14} that each $\Gamma_{p-j}$ appearing in (\ref{tc}) satisfies
$$ \Gamma_{p-j}(P+t,\eta+t) = \sum_{r=0}^{q-j} \Gamma_{p-j-r}(P,\eta)\frac{t^r}{r!}$$
for $j=0,\dots,q$ and that $\Gamma_{p-j}$ has again the properties (a), (b), (c). In particular, the choice $j=q$ yields that $\Gamma_{p-q}$ is translation invariant. It is now clear that the procedure described in \cite[pp. 1534--1535]{HS14} allows us to deduce Theorem \ref{Thm2} from Theorem \ref{Thm3} (and Proposition \ref{Lem1}).

We turn to linear independence. 

\begin{proposition}\label{Lem1}
Let $p\in\N_0$. The local tensor valuations $Q^m\phi_k^{r,s,j}$ with $m,r,s,j\in\N_0$, $2m+2j+r+s=p$, $k\in\{0,\dots,n-1\}$ and $j=0$ if $k\in\{0,n-1\}$, together with\\[1mm]
$\bullet$ if $n=3$, the local tensor valuations $Q^m\widetilde\phi^{r,s,j}$ with $m,r,s,j\in\N_0$, $2m+2j+r+s+2=p$,\\[1mm]
$\bullet$ if $n=2$, the local tensor valuations $Q^m\widetilde\phi_k^{r,s}$ with $m,r,s\in\N_0$, $2m+r+s+1=p$, $k\in\{0,1\}$,\\[1mm]
are linearly independent.
\end{proposition}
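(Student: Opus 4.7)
Since the list of valuations contains tilde terms only for $n\in\{2,3\}$, in dimension $n\ge 4$ the claim is precisely the linear independence asserted in Theorem \ref{Thm1} (which exhibits $Q^m\phi_k^{r,s,j}$ as a basis of $T_p(\Pn)$). It therefore remains to treat $n\in\{2,3\}$. The plan is to separate the classical from the tilde family via an involution on $\widetilde T_p(\Pn)$ induced by an orientation-reversing reflection, and then to establish linear independence within each family separately.

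Fix a hyperplane reflection $\sigma\in{\rm O}(n)\setminus{\rm SO}(n)$, so $\sigma^2={\rm id}$ and $\det\sigma=-1$. Because $\sigma\,{\rm SO}(n)\,\sigma^{-1}={\rm SO}(n)$ and $\sigma$ preserves translation covariance, the measure property and local determination, the rule $\Gamma^\sigma(P,\eta):=\sigma^{-1}\Gamma(\sigma P,\sigma\eta)$ defines an involution on $\widetilde T_p(\Pn)$; the verification of translation covariance uses the multiplicativity $\sigma(T\odot S)=\sigma T\odot\sigma S$ together with $(\sigma t)^j=\sigma(t^j)$. Since the classical generators $Q^m\phi_k^{r,s,j}$ are ${\rm O}(n)$-covariant (Theorem \ref{Thm1}), $(Q^m\phi_k^{r,s,j})^\sigma=Q^m\phi_k^{r,s,j}$. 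A short change-of-variables computation, using $\sigma(v\times u)=\det(\sigma)(\sigma v)\times(\sigma u)=-(\sigma v)\times(\sigma u)$ in $\R^3$ and $\overline{\sigma u}=-\sigma\overline u$ in $\R^2$, together with $\sigma Q=Q$, yields $(Q^m\widetilde\phi^{r,s,j})^\sigma=-Q^m\widetilde\phi^{r,s,j}$ and $(Q^m\widetilde\phi_k^{r,s})^\sigma=-Q^m\widetilde\phi_k^{r,s}$. Thus the classical valuations populate the $(+1)$-eigenspace and the tilde valuations the $(-1)$-eigenspace of this involution, so any alleged linear relation decouples into one among classical valuations (trivial by Theorem \ref{Thm1}) and one among tilde valuations.

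It remains to show linear independence within the tilde family, and for this the strategy parallels \cite[Sec.~4]{HS14}. The first step is the standard reduction to the translation-invariant case: each tensorial coefficient $\Gamma_{p-j}$ of the expansion \eqref{tc} inherits properties (a)--(c), and applying this to a putative relation and extracting the $j=q$ component kills all terms of positive $r$ and leaves a relation among translation-invariant tilde valuations $Q^m\widetilde\phi^{0,s,j}$ (for $n=3$), respectively $Q^m\widetilde\phi_k^{0,s}$ (for $n=2$). These are then distinguished by evaluating on carefully chosen probe polytopes: Borel sets $\eta$ localized on the normal cycle over a single face fix the dimension $k$ (needed only for $n=2$); the remaining coefficients are separated by evaluating on a polytope with one relevant face of prescribed direction $v\in\s^{n-1}$, reading off the contribution as an explicit tensor-valued polynomial in $v$, $\overline v$ (or $v\times u$), and $Q$, and letting $v$ vary over $\s^{n-1}$.

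The main obstacle, as in \cite{HS14}, is distinguishing the ambient metric tensor $Q$ from the edge metric tensor $Q_{L(F)}=v_F^2$; both contribute symmetric factors of the same shape, and only by letting $v_F$ range over $\s^{n-1}$ can one disentangle their powers via linear independence of appropriate tensor-valued polynomials in $v_F$. The additional factor $v_F\times u$ (respectively $\overline u$) present in the tilde valuations has constant tensor rank one and enters multilinearly in $v_F$ and $u$, so it does not interfere with this polynomial-separation step. Consequently the argument of \cite{HS14} transposes verbatim once the parity decomposition above has been installed, yielding the desired linear independence.
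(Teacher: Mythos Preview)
Your argument is correct and takes a route that is conceptually cleaner than the paper's, though the two are close cousins. The paper works directly with a combined relation (classical plus tilde), evaluates on a $d$-dimensional polytope $F$ with product sets $\beta\times\omega$, applies the resulting tensor to $(x,\dots,x)$, and separates the two families by observing that one is even and the other odd in a distinguished coordinate ($x_1$ along $v_F$ for $n=3$, $x_2$ along $\overline u$ for $n=2$). Your reflection-induced involution $\Gamma\mapsto\Gamma^\sigma$ is exactly the coordinate-free incarnation of this parity split: the $(\pm1)$-eigenspaces are precisely what the paper isolates by even/odd degree. What your formulation buys is a clean decoupling before any evaluation, and it makes transparent \emph{why} the tilde valuations are the new ones (they are odd under orientation reversal). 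What the paper's direct computation buys is that the separation by $r$ (via the moment factors $\int_\beta x^r$) comes for free in the same evaluation, so no induction on translation degree is needed. Two points in your sketch deserve tightening. First, ``extracting the $j=q$ component kills all terms of positive $r$'' is not quite accurate: it isolates the terms with $r=r_{\max}$, pushed down to translation-invariant valuations of lower rank, and one must then iterate (or set up an induction on $p$). Second, the claim that the argument of \cite{HS14} transposes \emph{verbatim} to the tilde family is slightly optimistic: the extra rank-one factor $v_F\times u$ (resp.\ $\overline u$) contributes a $\sin\theta$ (resp.\ an $x_2$) in the polynomial, which must be divided out before the separation in $s$ via $\cos^s\theta$ and the subsequent polynomial argument in $(x_1,|x'|)$ apply---exactly as the paper does in Subcases~1b and~2a/2b. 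Neither point is a genuine obstacle.
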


\begin{proof}
For $n\ge 4$, the assertion is covered by \cite[Thm. 3.1]{HS14}. For the remaining cases, we extend the proof of that theorem. Let $n\in\{2,3\}$.

Let $F\in{\mathcal P}^n$ be a $d$-dimensional polytope, $d\in\{0,\dots,n-1\}$, and consider sets of the form $\eta=\beta\times \omega$ with Borel sets $\beta\subset \text{relint }F$ and $\omega\subset \s_{L(F)^\perp}$. For $k=d$, the representation (\ref{2.6}) reduces to
\begin{equation}\label{1} 
\phi_d^{r,s,j}(F,\beta\times\omega) =  Q_{L(F)}^j C_{n,d}^{r,s} \int_\beta x^r\,{\mathcal H}^d(\D x) \int_\omega u^s \, {\mathcal H}^{n-d-1}(\D u). 
\end{equation}
If $n=3$ and $d=1$, expression (\ref{1c}) yields
\begin{equation}\label{1n} 
\widetilde\phi^{r,s,j}(F,\beta\times\omega) = v_F^{2j+1} \int_\beta x^r\,{\mathcal H}^1(\D x) \int_\omega (v_F\times u)u^s \, {\mathcal H}^1(\D u). 
\end{equation}

\vspace{2mm}

\noindent{\bf Case 1:} $n=3$. Suppose that
\begin{equation}\label{7} 
\sum_{m,r,s,j,k \atop 2m+2j+r+s=p} a_{kmrsj} Q^m\phi_k^{r,s,j} + \sum_{m,r,s,j \atop 2m+2j+r+s+2=p} b_{mrsj} Q^m\widetilde\phi^{r,s,j}=0
\end{equation}
with $a_{kmrsj}, b_{mrsj}\in{\mathbb R}$ and with $a_{0mrsj}=a_{2mrsj}=0$ for $j\not=0$.

\vspace{2mm}

\noindent{\bf Subcase 1a:} $d\in\{0,2\}$. Then $\phi_k^{r,s,j}(F, \beta \times \omega) =0$ for $k\not=d$ and
$\widetilde \phi^{r,s,j}(F,\beta\times\omega)=0$ (by the choice of $\beta$ and $\omega$). It follows from (\ref{7}) and (\ref{1}) that
$$ \sum_{m,r,s,j \atop 2m+2j+r+s=p} a_{dmrsj}Q^m Q_{L(F)}^j C_{3,d}^{r,s}\int_\beta x^r\,{\mathcal H}^d(\D x) \int_\omega u^s\,{\mathcal H}^{3-d-1}(\D u)=0$$
for all $F,\beta,\omega$ as specified. The proof of \cite[Thm. 3.1]{HS14} shows that this implies that all coefficients $a_{dmrsj}$ are zero.

\vspace{2mm}

\noindent{\bf Subcase 1b:} $d=1$. Then $\phi_{k}^{r,s,j}(F,\beta\times \omega) =0$ for $k\not=1$, and from  (\ref{7}) (with $a_{kmrsj}=0$ for $k\in\{0,2\}$), (\ref{1}) and (\ref{1n}) we obtain
\begin{eqnarray*}
&& \sum_{m,r,s,j \atop 2m+2j+r+s=p} a_{1mrsj}Q^m Q_{L(F)}^j C_{3,1}^{r,s}\int_\beta x^r\,{\mathcal H}^1(\D x) \int_\omega u^s\,{\mathcal H}^{1}(\D u)\\
&& + \sum_{m,r,s,j \atop 2m+2j+r+s+2=p} b_{mrsj} Q^m v_F^{2j+1} \int_\beta x^r\,{\mathcal H}^1(\D x) \int_\omega (v_F\times u)u^s\,{\mathcal H}^{1}(\D u)=0.
\end{eqnarray*}
Since this holds for all $F,\beta,\omega$ as specified, we can argue as in the proof of \cite[Thm. 3.1]{HS14} and conclude that for each fixed $r$ and with $\overline a_{mrsj}:= a_{1mrsj}C_{3,1}^{r,s}$ we have
\begin{equation}\label{8}  
\sum_{m,s,j \atop 2m+2j+s=p-r} \overline a_{mrsj}Q^m Q_{L(F)}^j u^s + \sum_{m,s,j \atop 2m+2j+s+2=p-r} b_{mrsj} Q^m  v_F^{2j+1} (v_F\times u)u^s=0
\end{equation}
for all $u\in \s_{L(F)^\perp}$.

Let $(e_1,e_2,e_3)$ be a positively oriented orthonormal basis of $\R^3$ such that $e_1=v_F$. We apply (\ref{8}) to the $(p-r)$-tuple
$$ (\underbrace{x,\dots, x}_{p-r}) \quad\mbox{with}\quad x=x_1e_1+x_2e_2+x_3e_3 \in\R^3.$$
This gives
\begin{eqnarray*}
&& \sum_{m,s,j \atop 2m+2j+s=p-r} \overline a_{mrsj}(x_1^2+x_2^2+x_3^2)^mx_1^{2j}(u_2x_2+u_3x_3)^s\\ 
&& + \sum_{m,s,j \atop 2m+2j+s+2=p-r} b_{mrsj}(x_1^2+x_2^2+x_3^2)^mx_1^{2j+1}(-u_3x_2+u_2x_3)(u_2x_2+u_3x_3)^s=0
\end{eqnarray*}
for all $u_2,u_3\in\R$ such that $u_2e_2+u_3e_3\in{\mathbb S}^2$. Since the first sum defines a polynomial that is of even degree in $x_1$, whereas the second sum is of odd degree in $x_1$, either sum is zero. Denoting by $\theta$ the angle from $u_2e_2+u_3e_3$ to $x_2e_2+x_3e_3$, we can write the vanishing of the first sum as 
$$ 
\sum_{s\ge 0}\alpha_s\left(x_2^2+x_3^2\right)^{\frac{s}{2}}\cos^s\theta=0\quad \mbox{with} \quad \alpha_s=\sum_{m,j\atop 2m+2j=p-r-s} \overline a_{mrsj}(x_1^2+x_2^2+x_3^2)^mx_1^{2j}.
$$
Since this holds for all $\theta\in\R$, it follows that $\alpha_s=0$ for all $s$.
Similarly we obtain that 
$$ \sum_{m,j\atop 2m+2j+2=p-r-s} b_{mrsj}(x_1^2+x_2^2+x_3^2)^mx_1^{2j+1}=0 $$
for all $s$. Now the proof of \cite[Thm. 3.1]{HS14} shows that all coefficients $\overline a_{mrsj}, b_{mrsj}$ are zero.

\vspace{2mm}

\noindent{\bf Case 2:} $n=2$. Then $\phi_k^{r,s,j}\not=0$ only for $k\in\{0,1\}$ and hence also only for $j=0$. Therefore, we assume that
\begin{equation}\label{10} 
\sum_{m,r,s,k \atop 2m+r+s=p} a_{kmrs} Q^m\phi_k^{r,s,0} + \sum_{m,r,s,k \atop 2m+r+s+1=p} b_{kmrs} Q^m\widetilde\phi_k^{r,s}=0
\end{equation}
with $a_{kmrs}, b_{kmrs}\in{\mathbb R}$ and $k\in\{0,1\}$.
\vspace{2mm}

\noindent{\bf Subcase 2a:} $d=0$. Then $\phi_1^{r,s,0}(F,\beta\times\omega)=0$ and $\widetilde\phi_1^{r,s}(F,\beta\times \omega)=0$. 
Writing $\overline a_{0mrs}=a_{0mrs}\,C_{2,0}^{r,s}$, it follows from (\ref{10}) that 
\begin{eqnarray*}
&&  \sum_{m,r,s \atop 2m+r+s=p} \overline a_{0mrs} \, Q^m  \int_\beta x^r\,\Ha^0(\D x) \int_\omega u^s\,\Ha^1(\D u) \\ 
&& + \sum_{m,r,s \atop 2m+r+s+1=p} b_{0mrs} Q^m \int_\beta x^r\,\Ha^0(\D x)\int_\omega \overline u u^s\,\Ha^1(\D u)=0.
\end{eqnarray*}
Since this holds for all $F,\beta,\omega$ as specified, we obtain for each fixed $r$ that
$$ \sum_{m,s\atop 2m+s=p-r} \overline a_{0mrs} \,Q^mu^s+\sum_{m,s\atop 2m+s+1=p-r} b_{0mrs}Q^m\overline u u^s=0$$
for all $u\in \s^1$. We now choose the orthogonal basis $(e_1,e_2)$ of $\R^2$ such that $e_1=u$ and 
$e_2=\overline{u}$. Applying (\ref{10}) to the $(p-r)$-tuple
$$ (\underbrace{x,\dots, x}_{p-r}) \quad\mbox{with}\quad x=x_1e_1+x_2e_2\in\R^2,$$
we obtain
$$ \sum_{m,s \atop 2m+s=p-r} \overline a_{0mrs}(x_1^2+x_2^2)^m x_1^s +\sum_{m,s \atop 2m+s+1=p-r} 
b_{0mrs}(x_1^2+x_2^2)^mx_2x_1^s=0.$$
Since the first summand contains only even powers of $x_2$ and the second summand only odd powers, either summand must be zero, hence we get
$$ \sum_{m=0}^{\lfloor(p-r)/2\rfloor} c_m(x_1^2+x_2^2)^m x_1^{p-r-2m}=0,\qquad \sum_{m=0}^{\lfloor(p-r-1)/2\rfloor}d_m(x_1^2+x_2^2)^m x_2x_1^{p-r-2m-1}=0$$
with $c_m=\overline a_{0mr(p-r-2m)}$ and $d_m= b_{0mr(p-r-2m-1)}$. This yields that all coefficients $c_m,d_m$ are zero, hence all coefficents in (\ref{10}) are zero.

\vspace{2mm}

\noindent{\bf Subcase 2b:} $d=1$. Then $\phi_0^{r,s,0}(F,\beta\times\omega)=0$ and $\widetilde\phi_0^{r,s}(F,\beta\times \omega)=0$. We choose $\omega=\{u_F\}$, where $u_F$ is one of the two unit normal vectors of $F$. 
Then from (\ref{10}), applied to $(F,\beta\times\omega)$, we obtain
$$ \sum_{m,r,s \atop 2m+r+s=p} a_{1mrs} Q^m  C_{2,1}^{r,s}\, u_F^s \int_\beta x^r\,\Ha^1(\D x)+ 
\sum_{m,r,s \atop 2m+r+s+1=p} b_{1mrs} Q^m\, \overline{u_F}\,u_F^s \int_\beta x^r\,\Ha^1(\D x)=0.$$
As above, for each fixed $r$ and with $\overline a_{1mrs}:= a_{1mrs}C_{2,1}^{r,s}$ this yields
\begin{equation}\label{11}  
\sum_{m,s \atop 2m+s=p-r} \overline a_{1mrs}Q^mu_F^s + \sum_{m,s \atop 2m+s+1=p-r} b_{1mrs} Q^m \,\overline{u_F}\,u_F^s=0.
\end{equation}
We choose the orthogonal basis $(e_1,e_2)$ of $\R^2$ such that $e_1=u_F$ and $e_2=\overline{u_F}$. Applying (\ref{11}) to the $(p-r)$-tuple
$$ (\underbrace{x,\dots, x}_{p-r}) \quad\mbox{with}\quad x=x_1e_1+x_2e_2\in\R^2,$$
we obtain
$$ \sum_{m,s \atop 2m+s=p-r} \overline a_{1mrs}(x_1^2+x_2^2)^m x_1^s +\sum_{m,s \atop 2m+s+1=p-r} b_{1mrs}(x_1^2+x_2^2)^mx_2x_1^s=0.$$
Now we can conclude as before that all coefficients in (\ref{11}) and hence in \eqref{10} are zero.
\end{proof}

\section{Some Refined Lemmas}\label{sec5}

In this section, we extend Lemmas 3 and 4 in \cite{Sch13}, essentially from ${\rm O}(n)$ covariance to ${\rm SO}(n)$ covariance. (We remark that in Lemma 3 of \cite{Sch13}, the group ${\rm SO}(n)$ should be replaced by ${\rm O}(n)$, since this is used in the proof. This does not affect the rest of the paper, where Lemma 3 is only applied with ${\rm O}(n)$.)

\begin{lemma}\label{L3} 
Let $L\in G(n,k)$ with $k\in\{1,\dots,n-1\}$, let $r\in\N_0$ and $T\in\T^{\hspace*{1pt}r}$.

\noindent $\rm (a)$ Let $k\ge 2$. If $\vartheta T=T$ for each $\vartheta\in{\rm SO}(n)$ that fixes $L^\perp$ pointwise, then 
\begin{equation}\label{00}  
T= \sum_{j=0}^{\lfloor r/2\rfloor} Q_L^j\pi_{L^\perp}^*T^{(r-2j)} 
\end{equation}
with tensors $T^{(r-2j)}\in \T^{\hspace*{1pt}r-2j}(L^\perp)$, $j=0,\dots,\lfloor r/2\rfloor$.

\noindent $\rm (b)$ Let $k=1$. Let $v_L$ be a unit vector spanning $L$. Then 
$$ T = \sum_{j=0}^r v_L^j\pi_{L^\perp}^*T^{(r-j)} $$
with tensors $T^{(m)}\in \T^{\hspace*{1pt}m}(L^\perp)$.
\end{lemma}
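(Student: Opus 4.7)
The plan is to reduce both parts to a single algebraic decomposition of $T$ with respect to the orthogonal splitting $\R^n=L\oplus L^\perp$, and then in (a) invoke classical invariant theory for $\mathrm{SO}(k)$ acting on symmetric tensors of $L$. Writing each argument as $a_i=\pi_L a_i+\pi_{L^\perp}a_i$ and expanding $T(a_1,\dots,a_r)$ multilinearly, the symmetry of $T$ groups the resulting terms according to the number $s$ of arguments projected onto $L^\perp$, yielding a canonical decomposition $T=\sum_{s=0}^{r} T_s$ in which each $T_s$ corresponds to a bilinear pairing between a symmetric $(r-s)$-tensor on $L$ and a symmetric $s$-tensor on $L^\perp$.

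For part (a) I would first observe that a $\vartheta\in\mathrm{SO}(n)$ fixing $L^\perp$ pointwise acts on $L$ as an element of $\mathrm{SO}(L)\cong\mathrm{SO}(k)$ (since $\det\vartheta=1$ and $\det\vartheta|_{L^\perp}=1$ force $\det\vartheta|_L=1$), and every element of $\mathrm{SO}(k)$ arises in this way by trivial extension. Hence the hypothesis amounts to saying that the ``$L$-factor'' of each $T_s$ is an $\mathrm{SO}(k)$-invariant symmetric $(r-s)$-tensor on $L$. The key algebraic input is that for $k\ge 2$ the $\mathrm{SO}(k)$-invariant symmetric $m$-tensors on $L$ vanish when $m$ is odd and are spanned by $Q_{(L)}^{m/2}$ when $m$ is even; the extra $\mathrm{SO}(k)$-invariant available over the $\mathrm{O}(k)$-invariants, namely the volume form, is antisymmetric and therefore contributes nothing in the symmetric category. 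This forces $T_s=0$ for $r-s$ odd and $T_s=Q_L^{(r-s)/2}\pi_{L^\perp}^{*}T^{(s)}$ otherwise, and reindexing by $j=(r-s)/2$ yields \eqref{00}.

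For part (b) the only $\vartheta\in\mathrm{SO}(n)$ fixing $L^\perp$ pointwise is the identity, because its restriction to $L=\mathrm{span}(v_L)$ must be $\pm 1$ and orientation-preservation (together with $\det\vartheta|_{L^\perp}=1$) rules out $-1$; hence no invariance condition remains. The conclusion is then purely algebraic: writing $\pi_L a_i=\langle a_i,v_L\rangle v_L$ and grouping terms according to the number $j$ of arguments on which $\pi_L$ has been applied, the symmetry of $T$ collapses the $\binom{r}{j}$ contributions into $v_L^{\,j}\,\pi_{L^\perp}^{*}T^{(r-j)}$ with a uniquely determined $T^{(r-j)}\in\T^{\,r-j}(L^\perp)$, giving the stated formula.

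The main obstacle is the invariant-theoretic step in (a), namely the verification that $\mathrm{SO}(k)$- and $\mathrm{O}(k)$-invariance coincide on symmetric tensors when $k\ge 2$; this is exactly the refinement relative to Lemma 3 of \cite{Sch13} being carried out here. A clean route is the $\mathrm{SO}(k)$-equivariant decomposition $\mathrm{Sym}^{m}(\R^k)=\bigoplus_{i\ge 0}Q_{(L)}^{i}\cdot \mathcal{H}^{m-2i}$, where $\mathcal{H}^d$ denotes degree-$d$ harmonic polynomials: for $k\ge 3$ each $\mathcal{H}^d$ with $d\ge 1$ is an irreducible non-trivial $\mathrm{SO}(k)$-representation, while for $k=2$ it splits into the non-trivial characters $e^{\pm id\theta}$, so only the constants $\mathcal{H}^0$ contribute invariants, giving exactly the description of symmetric invariants used above.
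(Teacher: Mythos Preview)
Your argument is correct but takes a different route from the paper's for part (a). The paper passes to the polynomial $p_T(y)=T(y,\dots,y)$ and observes that, since $k\ge 2$, the rotations in $\mathrm{SO}(n)$ fixing $L^\perp$ pointwise act transitively on each sphere $\{y_1^2+\dots+y_k^2=\rho^2\}$ in $L$ (with the $L^\perp$-coordinates held fixed); this transitivity forces $p_T$ to depend on the $L$-variables only through $y_1^2+\dots+y_k^2$, after which the proof of \cite[Lemma~3]{Sch13} applies verbatim. You instead decompose $T$ tensorially along $L\oplus L^\perp$ and invoke the representation-theoretic fact that $\mathrm{SO}(k)$- and $\mathrm{O}(k)$-invariants coincide on symmetric tensors for $k\ge 2$, justified via the harmonic decomposition. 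Both arguments rest on the same phenomenon---the extra $\mathrm{SO}(k)$-invariant is the volume form, which is antisymmetric---but the paper's transitivity argument is more elementary and self-contained, while yours makes the underlying structure explicit and would adapt more readily to related settings. One small point: your phrase ``the $L$-factor of each $T_s$'' implicitly treats $T_s$ as a pure tensor in $\T^{r-s}(L)\otimes\T^{s}(L^\perp)$; this is harmless because $(\T^{r-s}(L))^{\mathrm{SO}(k)}$ is at most one-dimensional, but it would be cleaner to say that $T_s$ lies in $(\T^{r-s}(L))^{\mathrm{SO}(k)}\otimes\T^{s}(L^\perp)$ and then use that this space is $\mathrm{span}\{Q_{(L)}^{(r-s)/2}\}\otimes\T^{s}(L^\perp)$ or zero. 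For part (b) the two proofs coincide: the invariance hypothesis is vacuous and both simply expand along powers of $v_L$.
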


\begin{proof}  
Given an orthonormal basis $(e_1,\dots,e_n)$ of $\Rn$, we associate with $T\in\T^{\hspace*{1pt}r}$, represented in coordinates by
$$ T= \sum_{1\le i_1\le\dots\le i_r\le n} t_{i_1\dots i_r}e_{i_1}\cdots e_{i_r},$$
the polynomial on $\Rn$ defined by
\begin{equation}\label{5.1} 
p_T(y)=  \sum_{1\le i_1\le\dots\le i_r\le n} t_{i_1\dots i_r}y_{i_1}\cdots y_{i_r}, \qquad y=\sum_{i=1}^n y_ie_i.
\end{equation}
The mapping $T\mapsto p_T$ is a vector space isomorphism  between $\T^{\hspace*{1pt}r}$ and the vector space of homogeneous polynomials of degree $r$ on $\Rn$. It is compatible with the operation of the orthogonal group, that is, it satisfies $p_{\vartheta T}(y)= p_T(\vartheta^{-1}y)$ for $y\in\Rn$ and $\vartheta\in{\rm O}(n)$. 

We choose the orthonormal basis $(e_1,\dots,e_n)$ in such a way that $e_1,\dots,e_k$ span the subspace $L$ and $e_{k+1},\dots,e_n$ span its orthogonal complement $L^\perp$.

(a) Assume that the assumption of (a) is satisfied.  Then the polynomial $p_T$ defined by (\ref{5.1}) satisfies $p_T(\vartheta^{-1}y) = p_{\vartheta T}(y) =p_T(y)$ for each $\vartheta\in {\rm SO}(n)$ fixing $L^\perp$ pointwise. For $\rho>0$ and $\zeta_{k+1},\dots,\zeta_n\in\R$, the group  of such rotations is transitive on the set
$$ \left\{y=y_1e_1+\dots+y_ne_n\in\Rn: y_1^2+\dots+y_k^2=\rho^2,\, y_{k+1}=\zeta_{k+1},\dots,y_n=\zeta_n\right\}.$$
(Here it is used that $k\ge 2$.) Therefore, the proof of Lemma 3 in \cite{Sch13} yields the assertion.

(b) Now let $k=1$. Then we can assume that $e_1=v_L$ and write
\begin{eqnarray*} 
p_T(y) &=& \sum_{1 \le i_1\le\dots\le i_r\le n} t_{i_1\dots i_r}y_{i_1}\cdots y_{i_r}\\
&=& \sum_{j=0}^r y_1^j \sum_{2\le i_{j+1}\le\dots\le i_r\le n} t_{1\dots 1  i_{j+1}\dots i_r}y_{i_{j+1}}\cdots y_{i_r}.
\end{eqnarray*}
We define a tensor $T^{(r-j)}\in\T^{r-j}(L^\perp)$ by
$$ T^{(r-j)} := \sum_{2\le i_{j+1}\le\dots\le i_r\le n} t_{1\dots 1i_{j+1}\dots i_r}e_{i_{j+1}}\cdots e_{i_r}$$
and obtain the assertion (b).
\end{proof}

For a $\T^{\hspace*{1pt}r}$-valued Borel measure $F$ on $\bS^{n-1}$ we say that it {\em intertwines orthogonal transformations} if $F(\theta B)= (\theta F)(B)$ for all $B\in{\cal B}(\bS^{n-1})$ and all orthogonal transformations $\theta\in{\rm O}(n)$. We say that $F$ {\em intertwines rotations} if $F(\vartheta B)= (\vartheta F)(B)$ for all $B\in{\cal B}(\bS^{n-1})$ and all rotations $\vartheta\in{\rm SO}(n)$. (Note that this terminology differs from the one in \cite{Sch13}.)

We recall Lemma 4 from \cite{Sch13}.

\begin{lemma}\label{L4a}
Let $n\in{\mathbb N}$. Let $r\in{\mathbb N}_0$, and let $F:{\cal B}(\bS^{n-1})\to\T^{\hspace*{1pt}r}$ be a $\T^{\hspace*{1pt}r}$-valued measure that intertwines orthogonal transformations. Then
\begin{equation}\label{2aa} 
F(B) = \sum_{j=0}^{\lfloor r/2  \rfloor} a_jQ^j\int_B u^{r-2j}\,\cH^{n-1}(\D u)
\end{equation}
for $B\in{\cal B}(\bS^{n-1})$, with real constants $a_j$, $j=0,\dots,\lfloor r/2  \rfloor$.
\end{lemma}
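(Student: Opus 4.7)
The plan is to show that $F$ is absolutely continuous with respect to $\cH^{n-1}$ with an everywhere ${\rm O}(n)$-equivariant density $f:\Sn\to\T^{r}$, and then to pin down the pointwise form of $f$ by applying Lemma~\ref{L3}(a) (or a direct $2$-dimensional argument) at the stabilizer of a point.

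\textbf{Step 1 (Equivariant density).} Equip $\T^{r}$ with the ${\rm O}(n)$-invariant norm induced by the standard scalar product and form the scalar total-variation measure $|F|(B):=\sup_{\pi}\sum_{i}\|F(B_{i})\|$, with the supremum over finite Borel partitions $\pi=\{B_{i}\}$ of $B$. Because the norm is invariant and $F$ intertwines orthogonal transformations, $|F|(\theta B)=|F|(B)$ for every $\theta\in{\rm O}(n)$ and $B\in\B(\Sn)$. As $\Sn$ is a single ${\rm O}(n)$-orbit, $\cH^{n-1}$ is the unique invariant Borel measure on $\Sn$ up to a positive factor, so $|F|=c\,\cH^{n-1}$ for some $c\ge 0$. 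Component-wise absolute continuity is then immediate and yields a bounded Borel density $f:\Sn\to\T^{r}$ with $F(B)=\int_{B}f\,\D\cH^{n-1}$. The almost-everywhere identity $f(\theta^{-1}u)=\theta^{-1}f(u)$ that follows from the intertwining of $F$ and the invariance of $\cH^{n-1}$ can then be upgraded to an everywhere equivariant Borel representative by Haar averaging, setting $\tilde f(u):=\int_{{\rm O}(n)}\theta^{-1}f(\theta u)\,\D\theta$; a Fubini argument shows $\tilde f=f$ a.e., while a change of variables shows $\tilde f$ is equivariant for every $\theta$ and every $u$.

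\textbf{Step 2 (Pointwise form of $f$).} Fix $u_{0}\in\Sn$ and set $L:=u_{0}^{\perp}$, so $L^{\perp}=\R u_{0}$. Equivariance forces $f(u_{0})$ to be invariant under the full stabilizer of $u_{0}$ in ${\rm O}(n)$, hence in particular under every $\theta\in{\rm SO}(n)$ fixing $L^{\perp}$ pointwise. For $n\ge 3$ we have $\dim L=n-1\ge 2$, and Lemma~\ref{L3}(a) gives
$$f(u_{0})=\sum_{j=0}^{\lfloor r/2\rfloor}Q_{L}^{\,j}\,\pi_{L^{\perp}}^{*}T^{(r-2j)}$$
with $T^{(r-2j)}\in\T^{r-2j}(L^{\perp})$. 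Since $L^{\perp}$ is one-dimensional and spanned by $u_{0}$, each $T^{(r-2j)}$ is a scalar multiple of $u_{0}^{r-2j}$ and $\pi_{L^{\perp}}^{*}u_{0}^{r-2j}=u_{0}^{r-2j}$; substituting $Q_{L}=Q-u_{0}^{2}$ and expanding by the binomial theorem collapses the sum into
$$f(u_{0})=\sum_{m=0}^{\lfloor r/2\rfloor}a_{m}\,Q^{m}u_{0}^{r-2m}$$
for real constants $a_{m}$. For $n=2$, Lemma~\ref{L3}(a) is inapplicable, but the stabilizer of $u_{0}$ in ${\rm O}(2)$ is generated by the reflection across $\R u_{0}$, and a direct parity computation in the basis of $\T^{r}$ built from $u_{0}$ and its orthogonal partner forces $f(u_{0})$ into the same shape.

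\textbf{Step 3 (Globalization and integration).} The tensor $\sum_{m}a_{m}Q^{m}u^{r-2m}$ transforms equivariantly in $u$, so the equivariance $f(\theta u_{0})=\theta f(u_{0})$ shows that the constants $a_{m}$ do not depend on the choice of $u_{0}$. Hence $f(u)=\sum_{m}a_{m}Q^{m}u^{r-2m}$ on all of $\Sn$, and integrating against $\cH^{n-1}$ yields the claimed representation~(\ref{2aa}).

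The main obstacle is Step~1: extracting an honest everywhere-equivariant Borel density from the abstract measure $F$. The total-variation trick with an invariant norm is the cleanest route to absolute continuity, because it reduces the question to the uniqueness of the invariant measure on the homogeneous space $\Sn$; the subsequent Haar averaging that upgrades an a.e.\ equivariant density to a genuinely equivariant one is then routine. Once this technical groundwork is in place, all the remaining structural content is supplied by Lemma~\ref{L3}(a) in dimension $n\ge 3$ and by an elementary parity argument in dimension $n=2$.
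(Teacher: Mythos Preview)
Your argument is correct for $n\ge 2$ and takes a genuinely different route from the paper. The paper does not reprove the lemma here; it cites \cite{Sch13} for $n\ge 2$ and disposes of $n=1$ by a one-line computation. The method of \cite{Sch13}, visible in the paper's proof of Lemma~\ref{L4}, evaluates $F$ on spherical caps $B_{u,\rho}$, uses stabilizer invariance to force $F(B_{u,\rho})$ into the form $\sum_j a_j(\rho)Q^ju^{r-2j}$, shows the coefficients are independent of $u$ by rotation covariance, and then passes from caps to arbitrary Borel sets by a measure-theoretic extension. Your approach instead first manufactures an everywhere ${\rm O}(n)$-equivariant density via total variation, uniqueness of the invariant measure on the homogeneous space $\Sn$, and Haar averaging over ${\rm O}(n)$; after that, Lemma~\ref{L3}(a) (or the parity argument for $n=2$) applies directly at each point, and the globalization is a clean one-liner using the linear independence of the tensors $Q^mu^{r-2m}$. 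Your route is more abstract but avoids the cap-to-Borel extension step; the paper's route is more elementary in that it never needs a density or Haar integration over the group.

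Two small points. First, you have not addressed $n=1$, which the statement explicitly includes; the case is trivial (the sphere has two points and ${\rm O}(1)=\{\pm 1\}$ forces $F(\{-e\})=(-1)^rF(\{e\})$), but it should be mentioned, and your Step~1 uniqueness argument still goes through since counting measure is the invariant measure on $\s^0$. Second, in Step~3 you are implicitly using that, for fixed $u\in\Sn$ with $n\ge 2$, the tensors $Q^mu^{r-2m}$, $m=0,\dots,\lfloor r/2\rfloor$, are linearly independent; this is true (apply to $(x,\dots,x)$ with $x=tu+sw$, $w\perp u$, and look at the highest power of $s$), but is worth stating since the well-definedness of the $a_m$ and their independence of $u_0$ both hinge on it.
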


In \cite{Sch13}, this lemma was proved for $n\ge 2$. If $n=1$, then ${\mathbb S}^1= \{e,-e\}$ and $F(\{e\}) =a(e)e^r$, $F(\{-e\}) =a(-e)e^r$ with real constants $a(e),a(-e)$. If $\theta\in {\rm O}(1)$ satisfies $\theta e=-e$, then $F(\theta\{e\})=\theta F(\{e\})$ yields $a(-e)=(-1)^ra(e)$. Trivially, $F$ can also be represented in the form
(\ref{2aa}) (which we prefer as a unified expression).

The following lemma concerns rotations only.

\begin{lemma}\label{L4}
Let $r\in{\mathbb N}_0$, and let $F:{\cal B}(\bS^{n-1})\to\T^{\hspace*{1pt}r}$ be a $\T^{\hspace*{1pt}r}$-valued measure that intertwines rotations.

\noindent $\rm (a)$ If $n\ge 3$, then
\begin{equation}\label{2a} 
F(B) = \sum_{j=0}^{\lfloor r/2  \rfloor} a_jQ^j\int_B u^{r-2j}\,\cH^{n-1}(\D u)
\end{equation}
for $B\in{\cal B}(\bS^{n-1})$, with real constants $a_j$, $j=0,\dots,\lfloor r/2  \rfloor$.

\noindent $\rm (b)$ Let $n=2$. Fix an orientation of $\R^2$, and for $u\in\bS^1$, let $\overline u$ be the unit vector such that $(u,\overline u)$ is a positively oriented orthonormal basis of $\R^2$. Then
$$ F(B) = \sum_{j=0}^r a_j\int_B \overline u^j  u^{r-j}\,\cH^1(\D u) $$
for $B\in{\cal B}(\bS^1)$, with real constants $a_j$. 
\end{lemma}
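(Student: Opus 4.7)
The plan is to reduce, in both parts, to the study of a $\T^{\hspace*{1pt}r}$-valued density $f\colon\Sn\to\T^{\hspace*{1pt}r}$ of $F$ with respect to $\cH^{n-1}$, and then to determine the pointwise structure of $f(u)$ via Lemma \ref{L3}, using transitivity of ${\rm SO}(n)$ on $\Sn$ to promote the scalar parameters appearing there to constants. The density is produced via the total variation measure $|F|$: this is a finite positive Borel measure on $\Sn$, and since ${\rm SO}(n)$ acts by isometries on $\T^{\hspace*{1pt}r}$, the intertwining relation for $F$ yields $|F|(\vartheta B)=|F|(B)$ for all $\vartheta\in{\rm SO}(n)$. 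As ${\rm SO}(n)$ acts transitively on $\Sn$ (for $n\ge 2$), uniqueness of ${\rm SO}(n)$-invariant Borel measure gives $|F|=c\cdot\cH^{n-1}$, so a Radon--Nikodym density $f$ exists, and a measurable selection of rotations carrying a fixed base point to each $u\in\Sn$ provides an everywhere ${\rm SO}(n)$-equivariant representative, i.e., $f(\vartheta u)=\vartheta f(u)$ for all $u,\vartheta$.

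For part (a), fix $u\in\Sn$. The stabilizer of $u$ in ${\rm SO}(n)$ is exactly the subgroup of rotations fixing $\mathrm{span}(u)$ pointwise, i.e., fixing $L^\perp$ pointwise where $L:=u^\perp$ has dimension $n-1\ge 2$. Lemma \ref{L3}(a) then gives
$$
f(u) = \sum_{j=0}^{\lfloor r/2\rfloor} Q_{u^\perp}^{\,j}\,\pi_{\mathrm{span}(u)}^* T^{(r-2j)}(u)
$$
with $T^{(r-2j)}(u)\in\T^{\hspace*{1pt}r-2j}(\mathrm{span}(u))$. Since $\mathrm{span}(u)$ is one-dimensional, each such $T^{(r-2j)}(u)=\alpha_j(u)\,u^{r-2j}$ with scalars $\alpha_j(u)$. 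Equivariance of $f$, combined with uniqueness of the decomposition in Lemma \ref{L3}(a), forces $\alpha_j(\vartheta u)=\alpha_j(u)$ for all $\vartheta\in{\rm SO}(n)$, and transitivity then makes each $\alpha_j$ a constant. Using $Q_{u^\perp}=Q-u^2$ and expanding binomially rewrites $f(u)$ as $\sum_{\ell=0}^{\lfloor r/2\rfloor} c_\ell\,Q^\ell u^{r-2\ell}$ with constants $c_\ell$, and integration over $B$ yields (\ref{2a}).

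For part (b), the stabilizer of any $u\in\s^1$ in ${\rm SO}(2)$ is trivial, so no pointwise invariance is imposed on $f(u)$. Applying Lemma \ref{L3}(b) with $L=\mathrm{span}(u)$ and $v_L=u$, and observing that $u^\perp$ is one-dimensional and spanned by $\overline u$, each $T^{(r-j)}(u)\in\T^{\hspace*{1pt}r-j}(u^\perp)$ equals $\beta_j(u)\,\overline u^{\,r-j}$, so that $f(u)=\sum_{j=0}^r \beta_j(u)\,u^j\,\overline u^{\,r-j}$. Since $\vartheta\overline u=\overline{\vartheta u}$ for $\vartheta\in{\rm SO}(2)$, equivariance of $f$ yields $\beta_j(\vartheta u)=\beta_j(u)$, and transitivity makes each $\beta_j$ constant. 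Reindexing $j\leftrightarrow r-j$ produces the claimed expression. The main obstacle in this plan is the promotion of the a.e.\ equivariance of the density $f$ to a genuinely everywhere-equivariant representative; once this is in hand, the argument is a direct pointwise application of Lemma \ref{L3} followed by transitivity.
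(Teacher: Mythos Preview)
Your approach is correct and reaches the same conclusion, but the reduction to Lemma~\ref{L3} is organized differently from the paper. The paper evaluates $F$ on spherical caps $B_{u,\rho}$ centred at $u$: since such a cap is invariant under every rotation fixing $u$, the tensor $F(B_{u,\rho})$ is \emph{exactly} invariant under the stabilizer of $u$, so Lemma~\ref{L3} applies to it directly; the resulting coefficients $b_j(u,\rho)$ are shown to be independent of $u$ via the intertwining relation, and the passage from caps to arbitrary Borel sets is then carried out as in \cite[Lemma~4]{Sch13}. You instead pass through a density: invariance of the total variation $|F|$ forces $|F|=c\,\cH^{n-1}$, giving an a.e.\ density $f$, which you upgrade to an everywhere-equivariant representative and then analyse pointwise with Lemma~\ref{L3}. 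Your route is more direct once the equivariant density is secured and handles all Borel sets at once, whereas the cap argument sidesteps precisely the measure-theoretic step you flag as the main obstacle. One remark on that step: the ``measurable selection'' justification is not quite enough as written, since defining $\tilde f(u)=\sigma(u)T_0$ yields equivariance only if $T_0$ is stabilizer-invariant, which still has to be arranged. The clean fix is Haar averaging: set $\tilde f(u)=\int_{{\rm SO}(n)}\vartheta^{-1}f(\vartheta u)\,\D\mu(\vartheta)$, which is everywhere defined, everywhere equivariant, and (by Fubini together with $F(\vartheta B)=\vartheta F(B)$) still a density of $F$.
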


\begin{proof} We modify the argumentation in the proof of \cite[Lemma 4]{Sch13}, replacing the group ${\rm O}(n)$ by ${\rm SO}(n)$. We fix a vector $u\in \bS^{n-1}$ and denote by $B_{u,\rho}$ the spherical cap with centre $u$ and spherical radius $\rho\in (0,\pi/2)$. Let $T:= F(B_{u,\rho})$. Then $\vartheta T=T$ for all rotations $\vartheta\in{\rm SO}(n)$ fixing $u$. We choose the orthonormal basis $(e_1,\dots,e_n)$ such that $e_n=u$. 

(a) If $n\ge 3$, then $\dim u^\perp\ge 2$. Therefore, the proof of \cite[Lemma 4]{Sch13} goes through if we apply 
in $L=u^\perp$ part (a) of the present Lemma \ref{L3} (instead of \cite[Lemma 3]{Sch13}).

(b) Now we assume that $n=2$. We apply  Lemma \ref{L3}(b) with $L=u^\perp$ and $v_L=\overline u$. This gives
$$ T= \sum_{j=0}^r \overline u^j \pi^*_{L^\perp} T^{(r-j)}$$
with tensors $T^{(r-j)}\in \T^{r-j}({\rm lin}\{u\})$. Since every tensor in $\T^{r-j}({\rm lin}\{u\})$ is of the form $b_ju^{r-j}$ with a real constant $b_j$ (and since the tensor $T^{(r-j)}$ depends on $u$ and $\rho$), we obtain that
$$ F(B_{u,\rho})=\sum_{j=0}^r b_j(u,\rho) \overline u^j u^{r-j}.$$
This holds for all $u\in\s^1$ and does not depend on the choice of the basis. Since $F$ intertwines rotations, we have $\vartheta F(B_{u,\rho})=F(\vartheta B_{u,\rho}) = F(B_{\vartheta u,\rho})$ for $\vartheta \in{\rm SO}(2)$. This can be written as
$$ \sum_{j=0}^r b_j(u,\rho) (\vartheta \overline u)^j(\vartheta u)^{r-j} =\sum_{j=0}^r b_j(\vartheta u,\rho) (\vartheta \overline u)^j(\vartheta u)^{r-j}.$$
The tensors $(\vartheta \overline u)^j(\vartheta u)^{r-j}$, $j=0,\dots,r$, are linearly independent, hence $b_j(u,\rho)=:b_j(\rho)$ does not depend on $u$.

For given $u\in {\mathbb S}^1$ we can choose $e_2=u$ and then obtain, for $m\in \{0,\dots,r\}$, 
$$ \binom{r}{m}F(B_{u,\rho})(\underbrace{-e_1,\dots,-e_1}_m,\underbrace{e_2,\dots,e_2}_{r-m})=b_m(\rho).$$
Now we have all the ingredients to finish the proof in the same way as \cite[Lemma 4]{Sch13} was proved.
\end{proof}

\section{Proof of Theorem \ref{Thm3}}\label{sec6}

To prove Theorem \ref{Thm3}, we assume that $\Gamma: \Pn\times\B(\Sigma^n)\to \T^p$ is a mapping which has the following properties.\\[1mm]
(a) $\Gamma(P,\cdot)$ is a $\T^p$-valued measure, for each $P\in\Pn$,\\[1mm]
(b) $\Gamma$ is translation invariant and ${\rm SO}(n)$ covariant,\\[1mm]
(c) $\Gamma$ is locally defined.

We shall reduce the proof of Theorem \ref{Thm3} to the classification of a simpler type of tensor-valued mappings. Let $k\in \{0,\dots,n-1\}$, and let $L\in G(n,k)$. Let $A\in \B_b(L)$ and $B\in\B(\Sn)$. Let $P\subset L$ be a polytope with $A\subset P$. Then $A\times (B\cap L^\perp)\subset {\rm Nor}\,P$, and since $\Gamma$ is locally defined, $\Gamma(P,A\times (B\cap L^\perp))=:\varphi(A,B)$ does not depend on $P$. Since each coordinate of $\varphi(\cdot,B)$ with respect to some basis is a locally finite Borel measure which is invariant under translations of $L$ into itself, it follows that $\varphi(A,B) = \Ha^k(A)\Delta_k(L,B)$ with a tensor $\Delta_k(L,B)$. This defines a mapping 
$$\Delta_k: G(n,k)\times\B(\Sn)\to \T^p.$$ 
From the properties of $\Gamma$ it follows that this mapping has the following properties.\\[1mm]
(${\rm a}'$) $\Delta_k(L,\cdot)$ is a $\T^p$-valued measure, for each $L\in G(n,k)$,\\[1mm]
(${\rm b}'$)  $\Delta_k$ is ${\rm SO}(n)$ covariant, in the sense that
\begin{equation}\label{2.6a}
\Delta_k(\vartheta L,\vartheta B)=\vartheta\Delta_k(L,B) \quad\mbox{for }  \vartheta\in{\rm SO}(n).
\end{equation}
(${\rm c}'$) $\Delta_k(L,B)= \Delta_k(L,B\cap L^\perp)$ for $L\in G(n,k)$ and $B\in\B(\Sn)$.

Now let $P\in\Pn$, $A\in\B(\R^n)$ and $B\in\B(\Sn)$. Since $\Gamma(P,\cdot)$ is concentrated on ${\rm Nor}\,P$ (see \cite[Lemma 3.3]{HS14}, whose proof does not use ${\rm O}(n)$ covariance) and
$$ (A\times B)\cap{\rm Nor}\,P= \bigcup_{k=0}^{n-1}\bigcup_{F\in\F_k(P)} (A\cap{\rm relint}\,F)\times (B\cap\nu(P,F))$$
is a disjoint union, we have
\begin{eqnarray}\label{4}
\Gamma(P,A\times B) &=&\Gamma(P,(A\times B)\cap{\rm Nor}\,P)\nonumber\\
&=& \sum_{k=0}^{n-1} \sum_{F\in\F_k(P)} \Gamma(P,(A\cap{\rm relint}\,F)\times (B\cap\nu(P,F)))\nonumber\\
&=& \sum_{k=0}^{n-1} \sum_{F\in\F_k(P)} \Ha^k(A\cap{\rm relint}\,F)\Delta_k(L(F),B\cap\nu(P,F)).
\end{eqnarray}
This is the reason why we want to determine $\Delta_k(L,B)$.

In order to classify these mappings, let $k\in\{0,\dots,n-1\}$, $L\in G(n,k)$, and  $B \in\B(\Sn)$.

\vspace{2mm}

\noindent{\bf Case 1:} $k=0$. Then $L^\perp=\R^n$, and $\Delta_0(\{0\},\cdot):\B(\Sn)\to\T^p$ is a $\T^p$-valued measure which, by (\ref{2.6a}), intertwines rotations.  

\vspace{2mm}

\noindent{\bf Subcase 1a:} $n\ge 3$. Lemma \ref{L4}(a) gives
\begin{equation}\label{2}
\Delta_0(\{0\},B)=  \sum_{j=0}^{\lfloor p/2\rfloor} a_jQ^j\int_B u^{p-2j}\,\Ha^{n-1}(\D u)
\end{equation}
with real constants $a_j$.

\vspace{2mm}

\noindent{\bf Subcase 1b:} $n=2$. Lemma \ref{L4}(b) gives
\begin{equation}\label{3}
\Delta_0(\{0\},B)=  \sum_{j=0}^p a_j\int_B \overline u^j u^{p-j}\,\Ha^{1}(\D u)
\end{equation}
with real constants $a_j$.

\vspace{2mm}

\noindent{\bf Case 2:} $k\ge 2$. If $\vartheta\in{\rm SO}(n)$ fixes $L^\perp$ pointwise, then $\vartheta L=L$, and  it follows from (\ref{2.6a}) (together with (${\rm c}'$)) that $T:=\Delta_k(L,B)$ satisfies $\vartheta T =T$. Therefore, we infer from Lemma \ref{L3}(a) that
\begin{equation}\label{4.7}
\Delta_k(L,B) = \sum_{j=0}^{\lfloor p/2 \rfloor}Q_L^j\pi_{L^\perp}^*T^{(p-2j)}(L,B)
\end{equation}
with tensors $T^{(p-2j)}(L,B) \in\T^{p-2j}(L^\perp)$, $j=0,\dots,\lfloor\, p/2\rfloor$. 

Let $y\in L\cap \bS^{n-1}$ and $x_1,\dots,x_p\in L^\perp$. For $ q\in\{0,\dots,\lfloor p/2 \rfloor\}$, we apply both sides of (\ref{4.7}) to the $p$-tuple $(y,\dots,y,x_1,\dots,x_{p-2q})$ and obtain
\begin{equation}\label{4.7a}
\Delta_k(L,B)(y,\dots,y,x_1,\dots,x_{p-2q}) = \binom{p}{2q}^{-1}T^{(p-2q)}(L,B)(x_1,\dots,x_{p-2q}).
\end{equation}

Let $\vartheta\in {\rm SO}(n)$ and $B\in{\mathcal B}(\bS_{L^\perp})$. Let $y\in L\cap \s^{n-1}$, $x_1,\ldots,x_{p-2j}\in L^\perp$, and $j\in \{0,\ldots,\lfloor\frac{p}{2}\rfloor\}$. Then, using \eqref{4.7a}, \eqref{2.6a}, the definition of the operation of $\vartheta$ on tensors and then again \eqref{4.7a}, we get
\begin{align}\label{strictrel0}
&T^{(p-2j)}(\vartheta L,\vartheta B)(\vartheta x_1,\ldots,\vartheta x_{p-2j})\nonumber\\
&=\binom{p}{2j}\Delta_k(\vartheta L,\vartheta B)(\vartheta y,\ldots,\vartheta y,\vartheta x_1,\ldots,\vartheta x_{p-2j})\nonumber\\
&=\binom{p}{2j}[\vartheta\Delta_k(  L, B)](\vartheta y,\ldots,\vartheta y,\vartheta x_1,\ldots,\vartheta x_{p-2j})\nonumber\\
&=\binom{p}{2j} \Delta_k(  L, B)(  y,\ldots,  y,  x_1,\ldots,  x_{p-2j})\nonumber\\
&=T^{(p-2j)}( L, B)( x_1,\ldots, x_{p-2j}).
\end{align}
Let $i_L:L\to\Rn$ be the inclusion map. Later, we have to observe that 
$$
i_{\vartheta L^\perp}^*\vartheta\pi_{L^\perp}^* Q_{(L^\perp)}=i_{\vartheta L^\perp}^*\vartheta Q_{L^\perp}
=i_{\vartheta L^\perp}^* Q_{\vartheta L^\perp}\\
=Q_{(\vartheta L^\perp)}.
$$
Since $\vartheta x_i\in\vartheta L^\perp$ for $i\in\{1,\ldots,p-2j\}$, we have
\begin{align*}
&[i_{\vartheta L^\perp}^*\vartheta\pi_{L^\perp}^* T^{(p-2j)}(L,B)](\vartheta x_1,\ldots,\vartheta x_{p-2j})\\
&=[\vartheta\pi_{L^\perp}^* T^{(p-2j)}(L,B)](\vartheta x_1,\ldots,\vartheta x_{p-2j})\\
&=[ \pi_{L^\perp}^* T^{(p-2j)}(L,B)](  x_1,\ldots,  x_{p-2j})\\
&=T^{(p-2j)}(L,B)(  x_1,\ldots,  x_{p-2j}).
\end{align*}
Thus, we finally get
\begin{equation}\label{4.8a}
T^{(p-2j)}(\vartheta L,\vartheta B)=i_{\vartheta L^\perp}^*\vartheta\pi_{L^\perp}^* T^{(p-2j)}(L,B),
\end{equation}
where both sides are considered as tensors in $\mathbb{T}^{p-2j}(\vartheta L^\perp)$. (Of course, the effect
of $i_{\vartheta L^\perp}^*$ and $\pi_{L^\perp}^*$ on the right side of \eqref{4.8a} is trivial
if the appropriate domain is considered in each case.)

Let $\theta \in {\rm O}(L^\perp)$ (the orthogonal group of $L^\perp$). We can choose a rotation 
$\vartheta\in {\rm SO}(n)$ such that the restriction of $\vartheta$ to $L^\perp$
coincides with $\theta$ and $\vartheta L=L$. Then \eqref{strictrel0} (or \eqref{4.8a}) implies that
\begin{equation}\label{4.8c}
T^{(p-2j)}( L,\theta B)=\theta T^{(p-2j)}(L,B),
\end{equation}
were again both sides are considered as tensors in $\mathbb{T}^{p-2j}(L^\perp)$. 

Because of (\ref{4.8c}), it follows from Lemma \ref{L4a} (applied in $L^\perp$) that 
\begin{equation}\label{4.8}
T^{(p-2j)}(L,B) = \sum_{i=0}^{\lfloor p/2\rfloor-j} \alpha_{ipj}(L) Q_{(L^\perp)}^i \int_B u^{p-2j-2i}\,\cH^{n-k-1}(\D u)
\end{equation}
with real constants $\alpha_{ipj}(L)$ (recall that $B\in\B(\s_{L^\perp})$). 

To show that the coefficients $\alpha_{ipj}(L)$ in (\ref{4.8}) are independent of $L$, we fix a $k$-dimensional linear subspace $L_0$ and put $\alpha_{ipj}(L_0)=:\alpha_{kipj}$. For a given $k$-dimensional subspace $L$, there is a rotation $\vartheta\in{\rm SO}(n)$ with $L=\vartheta L_0$. From
(\ref{4.8a}) and (\ref{4.8}) we obtain, for $B\in{\mathcal B}(\bS_{L^\perp})$ and $B_0=\vartheta^{-1}B\in \mathcal{B}(\bS_{L_0^\perp})$,
\begin{eqnarray}\label{4.8b}
T^{(p-2j)}(L,B) &=& T^{(p-2j)}(\vartheta L_0,\vartheta B_0)= i^*_{\vartheta L_0^\perp}\vartheta 
\pi_{L^\perp_0}^* T^{(p-2j)}(L_0,B_0) \nonumber\\
&=& i^*_{\vartheta L_0^\perp}\vartheta \sum_{i=0}^{\lfloor p/2\rfloor-j} \alpha_{ipj}(L_0) Q_{L_0^\perp}^i \int_{B_0} u^{p-2j-2i}\,\cH^{n-k-1}(\D u)\nonumber\\
&=& \sum_{i=0}^{\lfloor p/2\rfloor-j} \alpha_{kipj} Q_{(\vartheta L_0^\perp)}^i \int_{\vartheta B_0} u^{p-2j-2i}\, \cH^{n-k-1}(\D u)\nonumber\\
& = &\sum_{i=0}^{\lfloor p/2\rfloor-j} \alpha_{kipj} Q_{(L^\perp)}^i \int_B u^{p-2j-2i}\,\cH^{n-k-1}(\D u).
\end{eqnarray}
Relations (\ref{4.7}) and (\ref{4.8b}) now yield
$$ \Delta_k(L,B) = \sum_{j=0}^{\lfloor p/2 \rfloor} Q_L^j \sum_{i=0}^{\lfloor p/2 \rfloor-j} \alpha_{kipj}  Q_{L^\perp}^i \int_B u^{p-2j-2i}\,\cH^{n-k-1}(\D u).$$
Inserting $Q_{L^\perp}=Q-Q_L$, expanding and regrouping, we see that
\begin{equation}\label{16''}
\Delta_k(L,B)=\sum_{a=0}^{\lfloor p/2\rfloor} \sum_{b=a}^{\lfloor p/2\rfloor}  c_{pkab} Q^aQ_L^{b-a} \int_B u^{p-2b}\,\cH^{n-k-1}(\D u)
\end{equation}
with real constants $c_{pkab}$.

\vspace{2mm}

\noindent{\bf Case 3:} $k=1$. Again, we assume that $B\in\B(\bS_{L^\perp})$. Instead of (\ref{4.7}), we can only infer from Lemma \ref{L3}(b) that, after choosing a unit vector $v_L$ spanning $L$, we have
\begin{equation}\label{16'}
\Delta_1(L,B) = \sum_{j=0}^p v_L^j\pi_{L^\perp}^*T^{(p-j)}(L,B)
\end{equation}
with tensors $T^{(p-j)}(L,B)\in \T^{p-j}(L^\perp)$, $j=0,\dots, p$. Let $x_1,\dots,x_p\in L^\perp$. For $q\in\{0,\dots,p\}$, we apply both sides of (\ref{16'}) to the $p$-tuple 
\begin{equation}\label{18'}
(\underbrace{v_L,\dots,v_L}_q,x_1,\dots,x_{p-q})\end{equation} 
and obtain
\begin{equation}\label{17'}
\Delta_1(L,B)(v_L,\dots,v_L,x_1,\dots,x_{p-q}) = \binom{p}{q}^{-1}T^{(p-q)}(L,B)(x_1,\dots,x_{p-q}).
\end{equation}
Again, $T^{(p-q)}(L,B)$ is a $\T^{p-q}(L^\perp)$-valued measure on $\s_{L^\perp}$. It intertwines rotations of $L^\perp$.

\vspace{2mm}

\noindent{\bf Subcase 3a:} $n\ge 4$. Then $\dim L^\perp\ge 3$. Hence, we can apply Lemma \ref{L4}(a) in $L^\perp$ and obtain that
\begin{equation}\label{18} 
T^{(p-q)}(L,B)=\sum_{i=0}^{\lfloor \frac{p-q}{2}\rfloor} \beta_{pqi}(L) Q_{(L^\perp)}^i \int_B u^{p-q-2i}\,\cH^{n-2}(\D u).
\end{equation}
In the same way as (\ref{4.8b}) was deduced, we conclude that
\begin{equation}\label{18a} 
T^{(p-q)}(L,B) =\sum_{i=0}^{\lfloor \frac{p-q}{2}\rfloor} \beta_{pqi} Q_{(L^\perp)}^i \int_B u^{p-q-2i}\,\cH^{n-2}(\D u)
\end{equation}
with constants $\beta_{pqi}$. Relations (\ref{16'}) and (\ref{18a}) yield 
\begin{equation}\label{19}
\Delta_1(L,B) =\sum_{j=0}^p v_L^j \sum_{i=0}^{\lfloor \frac{p-j}{2}\rfloor} \beta_{pji} Q_{L^\perp}^i \int_B u^{p-j-2i}\,\cH^{n-2}(\D u).
\end{equation}
Since $v_L^2=Q_L$, we distinguish whether $j$ is even or odd and write (\ref{19}) as
$$ \Delta_1(L,B) = \Delta_1^{(0)}(L,B)+\Delta_1^{(1)}(L,B)$$
with 
\begin{eqnarray} \label{34a}
\Delta_1^{(0)}(L,B) &=& \sum_{a=0}^{\lfloor p/2\rfloor} Q_L^a \sum_{i=0}^{\lfloor p/2\rfloor-a} \beta_{p(2a)i}Q_{L^\perp}^i \int_B u^{p-2a-2i}\,\cH^{n-2}(\D u),\nonumber\\
\Delta_1^{(1)}(L,B) &=&\sum_{b=0}^{\lfloor\frac{p-1}{2}\rfloor} Q_L^b v_L\sum_{i=0}^{\lfloor \frac{p-1}{2} \rfloor-b} \beta_{p(2b+1)i} Q_{L^\perp}^i \int_B u^{p-2b-1-2i}\,\cH^{n-2}(\D u).
\end{eqnarray}
We can choose a rotation $\vartheta\in{\rm SO}(n)$ such that $\vartheta v_L=-v_L$ and that the restriction of $\vartheta$ to $L^\perp$ is a reflection of $L^\perp$ into itself. Moreover, we specialize $B$ to $B'$ such that $\vartheta B'=B'$. Then the last equation yields $\vartheta\Delta_1^{(1)}(L,B')=-\Delta_1^{(1)}(L,B')$, whereas the rotation covariance of $\Delta_1$ and of $\Delta_1^{(0)}$ yields $\vartheta\Delta_1^{(1)}(L,B')=\Delta_1^{(1)}(L,B')$. Thus, we obtain $\Delta_1^{(1)}(L,B')=0$ for all $B'\in{\mathcal B}({\mathbb S}_{L^\perp})$ with $\vartheta B'=B'$. Inserting (\ref{18'}), with various $q$, into (\ref{34a}) for $B'$ (for which it is zero), we deduce that
$$ \sum_{i=0}^{\lfloor \frac{p-1}{2}\rfloor-b} \beta_{p(2b+1)i}  Q_{L^\perp}^i \int_{B'} u^{p-2b-1-2i}\,\cH^{n-2}(\D u)=0$$
for $b=0,\dots,\lfloor \frac{p-1}{2}\rfloor$. Here $B'$ can be any Borel set in $\s_{L^\perp}$ which is invariant under some reflection of $\s_{L^\perp}$. Therefore, we can deduce that
$$ \sum_{i=0}^{\lfloor \frac{p-1}{2}\rfloor-b} \beta_{p(2b+1)i}  Q_{L^\perp}^i u^{p-2b-1-2i}=0$$
for all $u\in\s_{L^\perp}$. As in the proof of Proposition \ref{Lem1}, we conclude that all coefficients $\beta_{p(2b+1)i}$ are zero. It follows that $\Delta_1^{(1)}(L,B)=0$ for all $B$ and therefore $\Delta_1(L,B)= \Delta_1^{(0)}(L,B)$.
Since $Q_{L^\perp}=Q - Q_L$, we obtain
\begin{equation}\label{4a}
\Delta_1(L,B)=\sum_{a=0}^{\lfloor p/2\rfloor} \sum_{b=a}^{\lfloor p/2\rfloor}  c_{p1ab} Q^aQ_L^{b-a} \int_B u^{p-2b}\,\cH^{n-2}(\D u)
\end{equation}
with real constants $c_{p1ab}$.

\vspace{2mm}

\noindent{\bf Subcase 3b:} $n=3$. The choice of the unit vector $v_L\in L$ in Case 3 determines (together with the given orientation of $\R^3$) an orientation of $L^\perp$. For a given unit vector $u\in L^\perp$, let $\overline u\in L^\perp$ be the unique unit vector such that $(v_L,u,\overline u)$ is an orthonormal basis of $\R^3$. In other words, $\overline u= v_L\times u$, where $\times$ means the vector product.

Lemma \ref{L4}(b), applied in $L^\perp$, yields
$$ T^{(p-q)}(L,B) = \sum_{i=0}^{p-q} a_i(L) \int_B \overline u^i u^{p-q-i}\,\cH^1(\D u)$$
with constants $a_i(L)$. Arguments as used previously in Case 2 show that $a_i(L)=a_i$ is independent of $L$. With this and (\ref{16'}) we get 
\begin{equation}\label{30} 
\Delta_1(L,B) =  \sum_{q=0}^{p} v_L^q \sum_{i=0}^{p-q} a_i \int_B (v_L\times u)^i u^{p-q-i}\,\cH^1(\D u).
\end{equation}
We write
$$ \Delta_1 = \Delta_1^{(00)} + \Delta_1^{(10)} + \Delta_1^{(01)} + \Delta_1^{(11)},$$
where $\Delta_1^{(10)}=\Delta_1^{(01)}=0$ if $p=0$, $\Delta_1^{(11)}=0$ if $p\le 1$, and otherwise
$$ \Delta_1^{(\alpha\beta)}(L,B):=  \sum_{q=0 \atop q\equiv \alpha\,{\rm mod}\,2}^p v_L^q \sum_{i=0 \atop i\equiv \beta\,{\rm mod}\,2}^{p-q} a_i \int_B (v_L\times u)^i u^{p-q-i}\,\cH^1(\D u)$$
for $\alpha,\beta\in\{0,1\}$. Using that $v_L^2=Q_L$, $(v_L\times u)^2=Q_{L^\perp}-u^2$ and $Q=Q_L+Q_{L^\perp}$, we get 
$$\Delta_1^{(00)}(L,B) = \sum_{j=0}^{\lfloor p/2\rfloor} Q_L^j \sum_{m=0}^{\lfloor p/2\rfloor-j} b_m \int_B(Q-Q_L-u^2)^m u^{p-2j-2m}\,\cH^1(\D u).$$
After expanding and regrouping, this can be written as
\begin{equation}\label{31}
\Delta_1^{(00)}(L,B) =\sum_{a=0}^{\lfloor p/2\rfloor} \sum_{b=a}^{\lfloor p/2\rfloor} a_{pab} Q^a  Q_L^{b-a}  \int_B  u^{p-2b}\,\cH^1(\D u).
\end{equation}
In the same way, we obtain the representations
\begin{eqnarray*}
\Delta_1^{(10)}(L,B) &=&\sum_{a=0}^{\lfloor \frac{p-1}{2}\rfloor} \sum_{b=a}^{\lfloor \frac{p-1}{2}\rfloor} b_{pab} Q^a  Q_L^{b-a} v_L \int_B  u^{p-2b-1}\,\cH^1(\D u), \quad\mbox{if }p\ge 1,\\
\Delta_1^{(01)}(L,B) &=& \sum_{a=0}^{\lfloor \frac{p-1}{2}\rfloor} \sum_{b=a}^{\lfloor \frac{p-1}{2}\rfloor} c_{pab} Q^a Q_L^{b-a}  \int_B (v_L\times u) u^{p-2b-1}\,\cH^1(\D u),\quad\mbox{if }p\ge 1,\\
\Delta_1^{(11)}(L,B) &=& \sum_{a=0}^{\lfloor \frac{p-2}{2}\rfloor} \sum_{b=a}^{\lfloor \frac{p-2}{2}\rfloor} d_{pab} Q^a Q_L^{b-a} v_L \int_B (v_L\times u) u^{p-2b-2}\,\cH^1(\D u),\quad\mbox{if }p\ge 2.
\end{eqnarray*}
Arguing as in Subcase 3a, we can show that $\Delta_1^{(10)}(L,B)+\Delta_1^{(01)}(L,B)=0$. It follows that 
\begin{eqnarray}\label{32}
\Delta_1(L,B) &=& \sum_{a=0}^{\lfloor \frac{p}{2}\rfloor} \sum_{b=a}^{\lfloor \frac{p}{2}\rfloor} a_{pab} Q^a  Q_L^{b-a} \int_B  u^{p-2b}\,\cH^1(\D u)\nonumber\\
&& + \sum_{a=0}^{\lfloor \frac{p}{2}\rfloor-1} \sum_{b=a}^{\lfloor \frac{p}{2}\rfloor-1} d_{pab} Q^a  Q_L^{b-a} v_L \int_B (v_L\times u) u^{p-2b-2}\,\cH^1(\D u).
\end{eqnarray}

\vspace{2mm}

\noindent{\bf Subcase 3c:} $n=2$. By (\ref{16'}),
\begin{equation}\label{6}
\Delta_1(L,B) = \sum_{q=0}^p v_L^q\pi_{L^\perp}^*T^{(p-q)}(L,B)
\end{equation}
with $T^{(p-q)}(L,B)\in\T^{p-q}(L^\perp)$. We can assume that $v_L =\overline{u_L}$, where $u_L$ is one of the two unit normal vectors of $L$. Then $B\subset\{u_L, -u_L\}$ and 
$$ T^{(p-q)}(L,B) = c_{p-q}(L,B)u_L^{p-q}.$$
As above, we have
\begin{equation}\label{5}
T^{(p-q)}(\vartheta L,\vartheta B) =\vartheta T^{(p-q)}(L,B)\quad\mbox{for } \vartheta\in {\rm SO}(2).
\end{equation}
First suppose that $B=\{u_L\}$. Using (\ref{5}) for the rotation $\vartheta$ with $\vartheta u_L=-u_L$, we see that $c_{p-q}(L,\{-u_L\})=c_{p-q}(L,\{u_L\})=: c_{p-q}(L)$. Thus, for this $B$, we can write (\ref{6}) in the form
$$
\Delta_1(L,B) =\sum_{q=0}^p c_{p-q}(L) \int_B\overline u^qu^{p-q}\,\Ha^0(\D u).
$$ This holds also for $B=\{-u_L\}$, and since $\Delta_1(L,\{u_L,-u_L\})= \Delta_1(L,\{u_L\})+ \Delta_1(L,\{-u_L\})$, it holds for arbitrary $B\in\B(\s_{L^\perp})$. Now we can deduce as in Case 2 that $c_{p-q}(L):=c_{p-q}$ is also independent of $L$. Since $\overline u^2+u^2=Q$, we obtain
\begin{equation}\label{8a}
\Delta_1(L,B) =\sum_{a=0}^{\lfloor p/2\rfloor} \alpha_a Q^a\int_B u^{p-2a}\,\Ha^0(\D u) + \sum_{a=0}^{\lfloor \frac{p-1}{2}\rfloor} \beta_a Q^a\int_B \overline u u^{p-2a-1}\,\Ha^0(\D u).
\end{equation}

\vspace{2mm}

The representations (\ref{2}), (\ref{3}), (\ref{16''}), (\ref{4a}), (\ref{32}), (\ref{8a}) obtained for $\Delta_k$ now allow us to evaluate (\ref{4}). 

Let $P\in\Pn$, $A\in\B(\R^n)$ and $B\in\B(\Sigma^n)$. We consider first the case where $n=3$. Using (\ref{2}) for $k=0$, (\ref{32}) for $k=1$, and (\ref{16''}) for $k=2$, we can write (\ref{4}) in the form
\begin{eqnarray*}
&&\Gamma(P,A\times B)\\
&& =\sum_{F\in\F_0(P)} \Ha^0(A\cap {\rm relint}\,F) \sum_{j=0}^{\lfloor \frac{p}{2}\rfloor} a_j Q^j \int_{B\cap\nu(P,F)} u^{p-2j}\,\Ha^2(\D u)\\
&& \hspace{4mm} + \sum_{F\in\F_1(P)} \Ha^1(A\cap {\rm relint}\,F)\Bigg\{ \sum_{a=0}^{\lfloor \frac{p}{2}\rfloor} \sum_{b=a}^{\lfloor \frac{p}{2}\rfloor} a_{pab} Q^a Q_{L(F)}^{b-a}\int_{B\cap\nu(P,F)} u^{p-2b}\,\Ha^1(\D u)\\
&&  \hspace{4mm}+ \sum_{a=0}^{\lfloor \frac{p}{2}\rfloor-1} \sum_{b=a}^{\lfloor \frac{p}{2}\rfloor-1}d_{pab}Q^a Q_{L(F)}^{b-a} v_{L(F)}\int_{B\cap\nu(P,F)}(v_{L(F)}\times u) u^{p-2b-2}\,\Ha^1(\D u)\Bigg\}\\
&&  \hspace{4mm}+ \sum_{F\in\F_2(P)} \Ha^2(A\cap {\rm relint}\,F) \sum_{a=0}^{\lfloor \frac{p}{2}\rfloor} \sum_{b=a}^{\lfloor \frac{p}{2}\rfloor} c_{p2ab} Q^a Q_{L(F)}^{b-a}\int_{B\cap\nu(P,F)} u^{p-2b}\,\Ha^0(\D u).
\end{eqnarray*}
From (\ref{2.6}) and (\ref{1a}) we have
\begin{align*} 
& \sum_{f\in\F_0(P)}\Ha^0(A\cap {\rm relint}\,F)  Q^j\int_{B\cap\nu(P,F)} u^{p-2j}\,\Ha^2(\D u)={\rm const}\cdot Q^j\phi_0^{0,p-2j,0}(P,A\times B),\\
& \sum_{F\in\F_1(P)} \Ha^1(A\cap {\rm relint}\,F) Q^a Q_{L(F)}^{b-a}\int_{B\cap\nu(P,F)} u^{p-2b}\,\Ha^1(\D u)={\rm const}\cdot Q^a\phi_1^{0,p-2b,b-a}(P,A\times B), \\
& \sum_{F\in\F_1(P)}  \Ha^1(A\cap {\rm relint}\,F)Q^a Q_{L(F)}^{b-a}v_{L(F)} \int_{B\cap\nu(P,F)}(v_{L(F)}\times u) u^{p-2b-2}\,\Ha^1(\D u)\\
& = {\rm const}\cdot Q^a\widetilde\phi^{0,p-2b-2,b-a}(P,A\times B),
\end{align*}
$$ \sum_{F\in\F_2(P)} \Ha^2(A\cap {\rm relint}\,F) Q^a Q_{L(F)}^{b-a} \int_{B\cap\nu(P,F)} u^{p-2b}\,\Ha^0(\D u)={\rm const}\cdot Q^a\phi_2^{0,p-2b,b-a}(P,A\times B).$$
This shows that $\Gamma(P,A\times B)$ is a linear combination of expressions $Q^m\phi_k^{0,s,j}(P,A\times B)$ and $Q^m\widetilde\phi^{0,s,j}(P,A\times B)$, with coefficients independent of $P,A,B$ and with indices as specified in Theorem \ref{Thm3}. Since $\Gamma(P,\cdot)$, $Q^m\phi_k^{0,s,j}(P,\cdot)$, $Q^m\widetilde\phi^{0,s,j}(P,\cdot)$ are measures on $\B(\Sigma^n)$, the relations still hold if $A\times B$ is replaced by a general $\eta\in\B(\Sigma^n)$. This proves Theorem \ref{Thm3} in the case where $n=3$.

The proof for $n\ge 4$ is analogous, using (\ref{2}) for $k=0$, (\ref{4a}) for $k=1$, and (\ref{16''}) for $k\ge 2$. Also the proof for $n=2$ is analogous, where (\ref{3}) is used for $k=0$ and (\ref{8a}) for $k=1$. This finishes the proof of Theorem \ref{Thm3}. \qed

\noindent Authors' addresses:\\[2mm]
\noindent Daniel Hug\\
Karlsruhe Institute of Technology \\
Department of Mathematics \\
D-76128 Karlsruhe, Germany\\
e-mail: daniel.hug@kit.edu\\[2mm]
\noindent Rolf Schneider\\
Albert-Ludwigs-Universit\"at\\
Mathematisches Institut\\
D-79104 Freiburg i. Br., Germany\\
e-mail: rolf.schneider@math.uni-freiburg.de


\begin{thebibliography}{00}

\bibitem{Ale99a} Alesker, S., Continuous rotation invariant valuations on convex sets. {\em Ann. of Math.} {\bf 149} (1999), 977--1005.

\bibitem{Ale99b} Alesker, S., Description of continuous isometry covariant valuations on convex sets. {\em Geom. Dedicata} {\bf 74} (1999), 241--248.

\bibitem{Ale14} Alesker, S., New structures on valuations and applications. {\em Integral geometry and valuations,} 1--45, {\em Adv. Courses Math. CRM Barcelona,} Birkh\"auser/Springer, Basel, 2014.

\bibitem{BDMW02} Beisbart, C., Dahlke, R., Mecke, K., Wagner, H., Vector- and tensor-valued descriptors for spatial patterns. In {\em Morphology of Condensed Matter} (K. Mecke, D. Stoyan, eds), Lecture Notes in Physics {\bf 600}, pp. 238--260, Springer, Berlin, 2002.

\bibitem{BH14} Bernig, A., Hug, D., Kinematic formulas for tensor valuations. {\em J. Reine Angew. Math.} (to appear) 
DOI: 10.1515/crelle-2015-0023 arXiv:1402.2750v1

\bibitem{HP15} Haberl, Ch., Parapatits, L., Centro-affine tensor valuations. arXiv:1509.03831

\bibitem{HHKM14} H\"orrmann, J., Hug, D., Klatt, M., Mecke, K., Minkowski tensor density formulas for Boolean models. {\em Adv. in Appl. Math.} {\bf 55} (2014), 48--85. 

\bibitem{HKS15} Hug, D., Kiderlen, M., Svane, A. M., Voronoi-based estimation of Minkowski tensors from finite point samples. arXiv:1511.02394

\bibitem{HS14} Hug, D., Schneider, R., Local tensor valuations. {\em Geom. Funct. Anal.} {\bf 24} (2014), 1516--1564.

\bibitem{HS16} Hug, D., Schneider, R., Tensor valuations and their local versions. To appear in \cite{KJ16}.

\bibitem{HSS08a} Hug, D., Schneider, R., Schuster, R., The space of isometry covariant tensor valuations. {\em Algebra i Analiz} {\bf 19} (2007), 194--224 $=$ {\em St. Petersburg Math. J.} {\bf 19} (2008), 137--158.

\bibitem{HSS08b} Hug, D., Schneider, R., Schuster, R., Integral geometry of tensor valuations. {\em Adv. Appl. Math.} {\bf 41} (2008), 482--509.

\bibitem{KJ16} Kiderlen, M., Jensen, E. B. V. (eds), {\em Tensor Valuations and Their Appplications in Stochastic Geometry and Imaging}, Lecture Notes in Mathematics (to appear).

\bibitem{KKH14} Kousholt, A., Kiderlen, M., Hug, D., Surface tensor estimation from linear sections. {\em Math. Nachr.} {\bf 288} (2015), 1647--1672.

\bibitem{McM93} McMullen, P., Valuations and dissections. In {\em Handbook of Convex Geometry} (P. M. Gruber, J. M. Wills, eds), vol. B, pp. 933--988, North-Holland, Amsterdam, 1993.

\bibitem{McMS83} McMullen, P., Schneider, R., Valuations on convex bodies. In \emph{Convexity and its Applications} (P. M. Gruber, J. M. Wills, eds), pp. 170--247, Birkh\"{a}user, Basel, 1983.

\bibitem{McM97} McMullen, P., Isometry covariant valuations on convex bodies. {\em Rend. Circ. Mat. Palermo}, Ser. II, Suppl. {\bf 50} (1997), 259--271.

\bibitem{MKSM13} Mickel, W., Kapfer, S. C., Schr\"oder--Turk, G. E., Mecke, K., Shortcoming of the bond orientational order parameters for the analysis of disordered particulate matter. {\em J. Chem. Phys.} {\bf 138}(4):044501 (2013).

\bibitem{Mul53} M\"uller, H. R., \"Uber Momente ersten und zweiten Grades in der Integralgeometrie. {\em Rend. Circ. Mat. Palermo} (2) {\bf 2} (1953), 1--21.

\bibitem {Prz97} Przes{\l}awski, K., Integral representations of mixed volumes and mixed tensors. {\em Rend. Circ. Mat. Palermo}, Ser. II, Suppl. {\bf 50} (1997), 299--314.

\bibitem{Sai16} Saienko, M.: Tensor-valued valuations and curvature measures in Euclidean spaces. (in preparation)

\bibitem{Sch00} Schneider, R., Tensor valuations on convex bodies and integral geometry. {\em Rend. Circ. Mat. Palermo}, Ser. II,  Suppl. {\bf 65 } (2000), 295--316. 

\bibitem{Sch13} Schneider, R., Local tensor valuations on convex polytopes. {\em Monatsh. Math.} {\bf 171} (2013), 459--479.

\bibitem{Sch14} Schneider, R., {\em Convex Bodies: The Brunn-Minkowski Theory.} 2nd edn, Encyclopedia of Mathematics and Its Applications {\bf 151}, Cambridge University Press, Cambridge, 2014.

\bibitem{SS02} Schneider, R., Schuster, R., Particle orientation from section stereology. {\em Rend. Circ. Mat. Palermo}, Ser. II,  Suppl. {\bf 77 } (2006), 623--633. 

\bibitem{SS06} Schneider, R., Schuster, R., Tensor valuations on convex bodies and integral geometry, II. {\em Rend. Circ. Mat. Palermo}, Ser. II,  Suppl. {\bf 70 } (2002), 295--314. 

\bibitem{SchT10} Schr\"{o}der--Turk, G. E., Kapfer, S., Breidenbach, B., Beisbart, C., Mecke, K., Tensorial Minkowski functionals and anisotropy measures for planar patterns. {\em J. Microscopy} {\bf 238} (2010), 57--74.

\bibitem{SchT11} Schr\"{o}der--Turk, G. E., Mickel, W., Kapfer, S. C., Klatt, M. A., Schaller, F. M.,  Hoffmann, M. J. F., Kleppmann, N., Armstrong, P., Inayat, A., Hug, D., Reichelsdorfer, M., Peukert, W., Schwieger, W., Mecke, K., Minkowski tensor shape analysis of cellular, granular and porous structures. {\em Advanced Materials, Special Issue: Hierarchical Structures Towards Functionality} {\bf 23} (2011), 2535--2553. 

\bibitem{SchT12} Schr\"{o}der--Turk, G. E., Mickel, W., Kapfer, S. C., Schaller, F. M., Breidenbach, B., Hug, D., Mecke, K., Minkowski tensors of anisotropic spatial structure. {\em New J. of Physics} {\bf 15} (2013), 083028 (38 pp).

\end{thebibliography}
\end{document}